\newcommand{\communique}{\leftrightarrow}
\newcommand{\communiqueS}{\stackrel{S}{\rightarrow}}
\newcommand{\N}{\mathbb{Z}_{+}}
\newcommand{\Z}{\mathbb{Z}}
\newcommand{\Zd}{\mathbb{Z}^d}
\newcommand{\R}{\mathbb{R}}
\newcommand{\Rd}{\mathbb{R}^d}
\renewcommand{\P}{\mathbb{P}}
\newcommand{\E}{\mathbb{E}\ }
\newcommand{\Ber}{\text{Ber}}
\renewcommand{\epsilon}{\varepsilon}
\renewcommand{\phi}{\varphi}
\renewcommand{\limsup}{\overline{\lim}}
\renewcommand{\liminf}{\underline{\lim}}
\newcommand{\miniop}[3]{%
\renewcommand{\arraystretch}{0.6}
\begin{array}{c}
{\scriptstyle #1}\\
#2\\
{\scriptstyle #3}
\end{array}
\renewcommand{\arraystretch}{1}}
\newcommand{\1}{1\hspace{-1.3mm}1}
\newcommand{\pcfleche}{\overrightarrow{p_c}}
\begin{document}

\newtheorem{theorem}{Theorem}[section]
\newtheorem{conjecture}[theorem]{Conjecture}
\newcommand{\titre}{\title}

\newtheorem{lemma}[theorem]{Lemma}
\newtheorem{lemme}[theorem]{Lemma}
\newtheorem{defi}[theorem]{Definition}
\newtheorem{coro}[theorem]{Corollary}
\newtheorem{rem}[theorem]{Remark}
\newtheorem{prop}[theorem]{Proposition}
\newtheorem{theo}[theorem]{Theorem}


\titre[Percolation on oriented graphs]{Percolation and first-passage percolation on oriented graphs}
\date{\today}


\author{Olivier Garet}
\author{R{\'e}gine Marchand}
\address{Institut \'Elie Cartan Nancy (math{\'e}matiques)\\
Universit{\'e} de Lorraine\\
Campus Scientifique, BP 239 \\
54506 Vandoeuvre-l{\`e}s-Nancy  Cedex France}
\email{Olivier.Garet@univ-lorraine.fr}

\address{Institut \'Elie Cartan Nancy (math{\'e}matiques)\\
Universit{\'e} de Lorraine\\
Campus Scientifique, BP 239 \\
54506 Vandoeuvre-l{\`e}s-Nancy  Cedex France}
\email{Regine.Marchand@univ-lorraine.fr}

\def\motsclefs{Percolation, first-passage percolation, sharp transition.}

\subjclass[2000]{60K35, 82B43.} 
\keywords{\motsclefs}

\begin{abstract}
   We give the first properties of independent Bernoulli percolation, for oriented graphs on the set of vertices $\Zd$ that are translation-invariant and may contain loops.
  We exhibit some examples showing that the critical probability for the existence of an infinite cluster  may be direction-dependent.
  Then, we prove that the phase transition in a given direction is sharp, and study the links between percolation and first-passage percolation on these oriented graphs.
\end{abstract}

\maketitle 

\newcommand{\Edod}{\overrightarrow{\mathbb{E}}^d}

In percolation and directed percolation on the cubic lattice $\Zd$, infinite clusters do not have the same geometry. In the unoriented setting, as soon as the opening parameter $p$ exceeds the critical value $p_c(d)$ for the existence of an infinite cluster, one can build an infinite path in any given direction. 
On the contrary, in the oriented setting, clusters starting from the origin only live in the first quadrant; more precisely, when the opening parameter $p$ exceeds  the critical value $\overrightarrow{p_c}(d)$,  a deterministic cone gives the directions in which infinite paths are found. 

Between these two models, it seems natural to ask what may happen for percolation for oriented graphs, on the set of vertices $\Zd$, whose connections do not forbid any direction, or in other words, for oriented graphs that contain loops.

In the present paper, we first exhibit one example of such an oriented graph,  where every direction is permitted, but such that we observe two phase transitions: if $p$ is small, there there exists no infinite path, then when $p$ increases there is a phase where infinite paths exist but not in any direction (as in classical supercritical oriented percolation), and finally, when $p$ is large enough, infinite paths can grow in any direction (as in classical supercritical unoriented percolation). 

Then, coming back to the general framework, we give some properties of percolation on oriented graphs on $\Zd$ that give an echo to some standard results for unoriented percolation, with a particular attention to the links between  oriented percolation and first-passage oriented percolation.





\section{The framework and one example}
We deal here with an oriented graph whose vertices are the elements of $\Zd$, and whose edges are the couples $(x,y)$ such that $y-x$ belongs to a given finite set denoted by $\text{Dir}$. Hence, if $E$ denotes the set of edges, one has
$$E=\{(x,y)\in\Zd\times\Zd:\; y-x\in \text{Dir}\}.$$

For a given parameter $p \in(0,1)$, we endow the set $\Omega=\{0,1\}^E$ with the Bernoulli product $\P_p=\Ber(p)^{\otimes E}$: under this probability measure, the edges are independently open (state $1$) with probability $p$ or closed (state $0$) with probability $1-p$, and we are interested in the connectivity properties of the random graph $G(\omega)$ whose edges are the ones that are open in $\omega$.

For $x\in\Zd$, we denote by $C_+(x)$ the set of points that can be reached from $x$ by a path in the random graph $G$, \emph{i.e.} the points $y$ such that
there exists a sequence $(x_0,\dots,x_n)$ with $x_0=x$, $x_n=y$ and
$(x_i,x_{i+1})\in E$ for each $i\in\{0,\dots,n-1\}$.

For $u\in\Rd \backslash \{0\}$, we define $$D_u(x)=\sup_{y\in C_+(x)} \langle y-x,u\rangle.$$
The field $(D_u(x))_{x\in\Zd}$ is stationary and ergodic.
We set 
$$\theta_u(p)=\P_p(D_u(0)=+\infty)\text{ and }p_c(u)=\inf\{p>0: \;\theta_u(p)>0\}.$$
The quantity $D_u(x)$ measures the extension of the oriented open cluster issued from $x$ in direction $u$ and $p_c(u)$ is the critical parameter for the existence of an oriented open cluster that is unbounded in direction $u$. Note however that $D_u(x)=+\infty$ does not imply the existence of infinitely many points  of $C_+(x)$ close to the half-line $\R_+ u$.

\subsection*{An example}

We take here $d=2$, we fix some positive integer $M$ and we choose
\begin{align*}
  \text{Dir}=\{(0,-1);(-M,1),(-M+1,1),\dots,(-1,1),(0,1),(1,1),\dots,(M,1)\}.
\end{align*}  
In other words, the only allowed communications are the following: for all $x,x',y \in \Z$, 
\begin{itemize}
\item $(x,y)\to (x',y+1)$ if $|x-x'|\le M$
\item $(x,y)\to (x,y-1)$
\end{itemize}
Let us denote by $(e_1,e_2)$ the canonical basis for $\R^2$: with this set of edges, we give an advantage to direction $e_2$ when compared to direction $-e_2$. 

We first observe that for $M$ large enough, there exist values for the opening parameter $p$ such that there is percolation in direction $e_2$ but not in direction $-e_2$:

\begin{theo} Denote by $\pcfleche(2)$ the critical value for classical oriented percolation on $\N^2$. 
\begin{itemize}
\item For $M \ge 1$, $\displaystyle \inf_{u \in  \Rd \backslash \{0\}} p_c(u) \ge \frac1{2M+2}$ and $\displaystyle \sup_{u   \in\Rd \backslash \{0\}} p_c(u) \le \pcfleche(2)$.
\item For $M\ge 2$, $\displaystyle p_c(-e_2)\ge\frac1{2\sqrt{2M+1}}$.
\item For $M\ge 5$,  $\displaystyle p_c(e_2)\le  1-(1-\pcfleche(2))^{2/M}<\frac{-2\log( 1-\pcfleche(2))}{M}\le  \frac{2 \log 3}M$.
\end{itemize}
Particularly, for $M\ge 37$, $p_c(e_2)<p_c(-e_2).$
\end{theo}

\begin{proof}
$\bullet$ The mean number of self-avoiding open paths starting from $(0,0)$ with length $n$ is  at most $((2M+2)p)^n$. Thus  if $p<\frac1{2M+2}$, the number of self-avoiding open paths is integrable and thus almost surely finite, and there is no percolation at all. When $p>\pcfleche(2)$,  restricting $  \text{Dir}$ to $\{(0,-1), (1,1)\}$, then to $\{(0,-1), (-1,1)\}$, then to $\{(1,1), (-1,1)\}$, we obtain three copies of the standard oriented percolation in $\N^2$, and thus three percolation cones: it is then easy to see that for any $u\in  \Rd \backslash \{0\}$, $\P(D_u(0)=+\infty)>0$. 

$\bullet$   For a fixed integer $\ell$, the graph $(\Z^2,E)$ contains exactly  ${2\ell+n\choose \ell}(2M+1)^{\ell}$ paths from  $(0,0)$ to $\Z\times\{-n\}$ that contains $\ell$ steps upwards and $\ell+n$ steps downwards.
Then, the mean number of open self-avoiding paths from $(0,0)$ to the line $y=-n$ is no more that
\begin{align*}
&\sum_{\ell=0}^{+\infty} {2\ell+n\choose \ell} (2M+1)^{\ell} p^{2\ell+n}
\le \sum_{\ell=0}^{+\infty}(2M+1)^{\ell} (2p)^{2\ell+n}= \frac{(2p)^n}{1-4p^2(2M+1)},
\end{align*}
as soon as $4p^2(2M+1)<1$.
It follows  that for $p<\frac1{2\sqrt{2M+1}}$, the number of self-avoiding paths from  $(0,0)$ to  $\{(x,y)\in\Z^2;y\le 0\}$ is integrable, therefore it is almost surely finite. This gives the first inequality.

$\bullet$ For the last inequality, we build a dynamic independent directed percolation from bloc events with length $M/2$ that partition the horizontal lines. Remember that $M\ge 5$.
The probability that a given point $(x,y)$ in the segment$(\frac{M}2 \overline{x}+[-M/4,M/4))\times \{y\}$
    can be linked to some point in $ (\frac{M}2 (\overline{x}+1)+[-M/4,M/4))\times y+1\}$
is larger than  $1-(1-p)^{M/2}$. So is the probability that
        one can link this point to some point in
         $(\frac{M}2 (\overline{x}-1)+[-M/4,M/4))\times \{y+1\}$.
 Hence, we built a dynamic percolation of blocks in the spirit of Grimmett and Marstrand~\cite{Grimmett-Marstrand} (see also Grimmett~\cite{grimmett-book}), that stochastically dominates an independent directed bond percolation on $\Z^2$, with  parameter $1-(1-p)^{M/2}$.
Then, percolation in direction $e_2$  is possible as soon as $1-(1-p)^{M/2}>\pcfleche(2)$, whence
\begin{align*}
  p_c(e_2)&\le 1-\exp \left(\frac{2}M\log(1-\pcfleche(2)) \right)<-\frac{2}M\log(1-\pcfleche(2))\le \frac{2\log 3}M,
  \end{align*}
where the last inequality comes from Liggett's bound~\cite{MR1359822}: $\pcfleche(2)\le 2/3$.
The desired result follows.
\end{proof}

\begin{figure}  
  \begin{tabular}{cc}
    \frame{\includegraphics[scale=0.05]{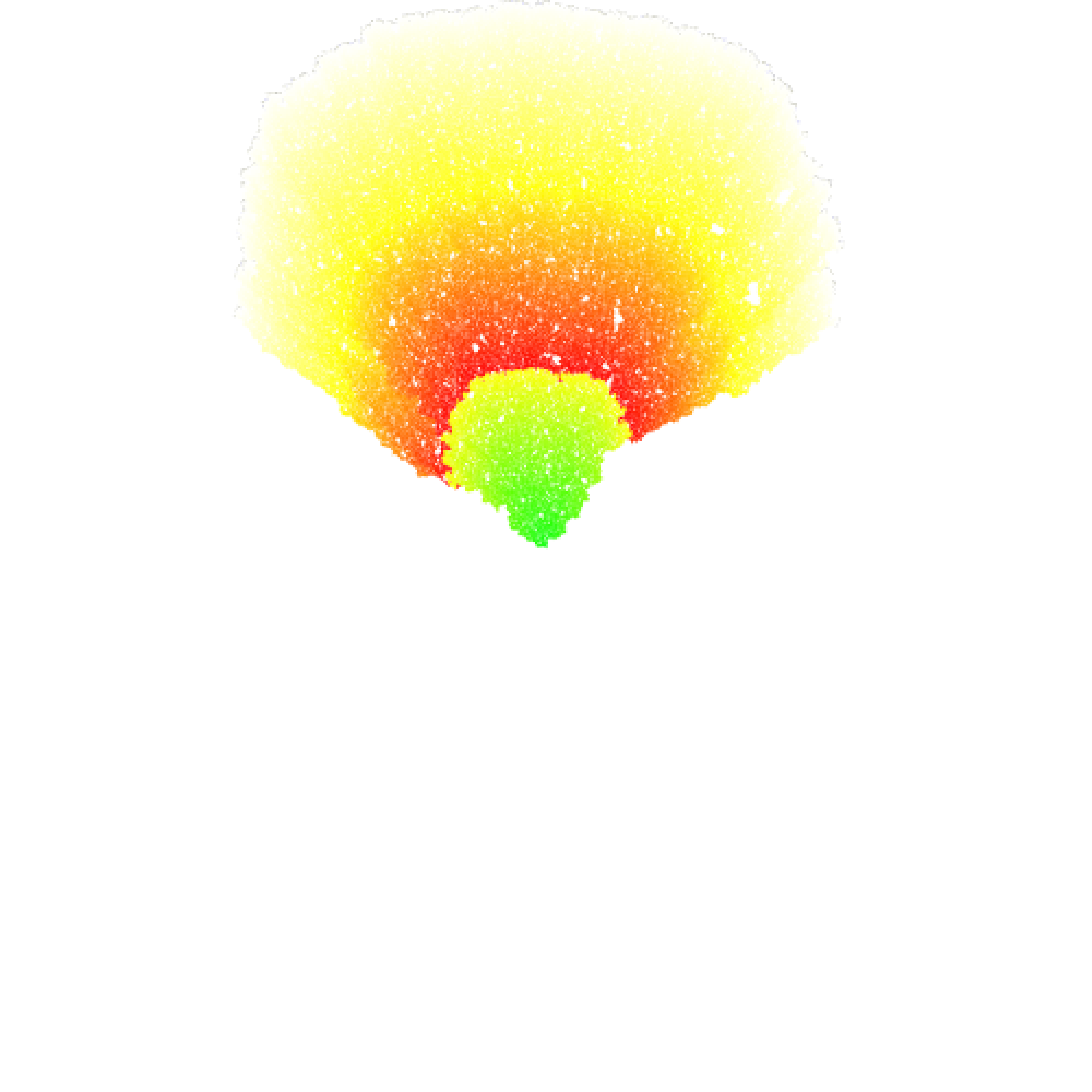}}&\frame{\includegraphics[scale=0.05]{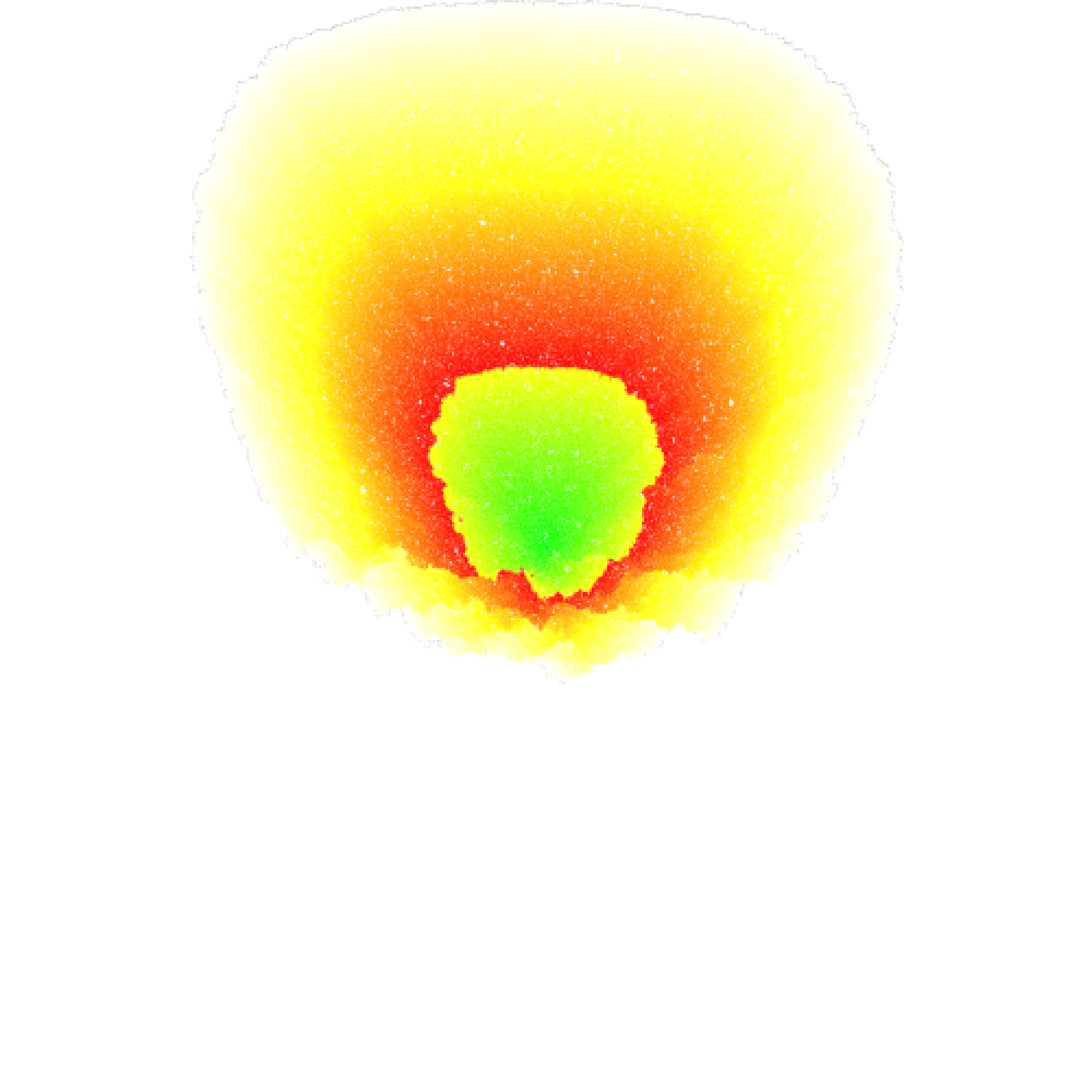}}\\
   \end{tabular} 
\caption{Oriented percolation with $M=1$, $p=0.51$ on the left and  $p=0.55$ on the right. The pictures are centered at the origin. The points are colored accordingly to their distance to the origin. The coloring is performed by the Dijkstra algorithm until one hits the border.}
\end{figure}

\section{A sharp percolation transition}

We now come back to our general framework.
Let $\Psi:\Zd\to\R$ be a subadditive function, i.e. such that for any $x,y \in \Zd$, 
$\Psi(x+y)\le\Psi(x)+\Psi(y)$.
  We define
  $$\forall x \in \Zd \quad r_\Psi(x)=\sup_{y\in C_+(x)}\Psi(y-x).$$
  The graph $(\Zd,E)$ being translation-invariant, the distribution of $r_\Psi(x)$ does not depend on $x$.

  If $A$, $B$ and $S$ are subsets of $\Zd$, the event $A\communiqueS B$ means that there exists a path $(x_0,\dots,x_n)$ with $x_0\in A$, $x_n\in B$, $x_i\in S$ for $i\in\{1,\dots,n-1\}$ and the bonds $(x_i,x_{i+1})$ are all open.
  
For $p\in[0,1]$ and $0\in S\subset \Zd$, we define
\begin{align}
\phi_p(S) & :=p \sum_{(x,y)\in \partial^+ S} \P_p (0\communiqueS  x),\text{where }\partial^+ S=E\cap(S\times (\Zd\backslash S))  \label{eq:16} \\
\tilde p_c(\Psi) &:= \sup\left\{
  \begin{array}{c}
  	p \in [0,1] : \; \text{there exists a set $S$ s.t. } 0\in S \subset \Zd \\
  	\text{with } \phi_p(S)<1 \text{ and } \sup_{S}\Psi <+\infty
  \end{array}\right\},  \label{eq:17} \\
    p_c(\Psi)&:=\sup\{p \in [0,1] : \; \P_p(r_\Psi(0)=\infty)=0\}\nonumber.
\end{align}
Note that in the above definition, the set $S$ may be infinite. Then, we have the following result:

\begin{theo}
\label{thm:perco} 
Fix $d\ge 2$. Let $\Psi:\Zd \to \R$ be  a subadditive function.
\begin{enumerate}
\item \label{item:1} 
For $p<\tilde p_c(\Psi)$, there exists $c=c(\Psi,p)>0$ such that for each $n\ge 1$,
$$\P_p(r_\Psi(0)\ge n)\le e^{-c n}.$$
\item\label{item:2} For $p>\tilde p_c(\Psi)$, $ \displaystyle \P_p(r_\Psi(0)=+\infty)\ge   \frac{p-\tilde p_c(\Psi)}{p(1-\tilde p_c(\Psi))}$.
\end{enumerate}
In particular,  \eqref{item:1} and~\eqref{item:2}   imply  that $\tilde p_c(\Psi)=p_c(\Psi)$.
\end{theo}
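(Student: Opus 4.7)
The plan is to follow the Duminil-Copin--Tassion strategy for sharp thresholds of Bernoulli percolation, adapted to this oriented setting: the functional $\phi_p(S)$ plays the role of their exploration quantity, so that $\phi_p(S)<1$ drives exponential decay below $\tilde p_c(\Psi)$, while above $\tilde p_c(\Psi)$ the bound $\phi_p(S)\ge 1$ feeds a differential inequality. Throughout, set $M:=\max_{d\in\text{Dir}}|\Psi(d)|$, which is finite because $\text{Dir}$ is finite.

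For~\eqref{item:1}, I would fix $p<\tilde p_c(\Psi)$ and choose a set $S$ with $0\in S$, $L:=\sup_S\Psi<+\infty$, and $\phi_p(S)<1$; set $\eta:=L+M$. On the event $\{r_\Psi(0)\ge n\}$ with $n>L$, any witness $y\in C_+(0)$ with $\Psi(y)\ge n$ must lie outside $S$, so every open path from $0$ to $y$ crosses $\partial^+S$; let $(u,v)$ be its first crossing edge. The piece of path before the crossing gives $0\communiqueS u$, while subadditivity yields $\Psi(y-v)\ge \Psi(y)-\Psi(u)-\Psi(v-u)\ge n-\eta$, so $r_\Psi(v)\ge n-\eta$. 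A union bound over the first crossing, combined with the standard exploration argument---reveal first every edge with an endpoint in $S$ (this determines $0\communiqueS u$ and the states of edges in $\partial^+S$); the remaining edges are iid Bernoulli$(p)$, so by translation invariance the reach of $v$ in this residual graph is dominated by $r_\Psi(0)$---yields the recursion
\begin{equation*}
\P_p(r_\Psi(0)\ge n)\le \phi_p(S)\cdot \P_p(r_\Psi(0)\ge n-\eta).
\end{equation*}
Iterating $\lfloor n/\eta\rfloor$ times gives exponential decay with rate $c=-\log\phi_p(S)/\eta>0$.

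For~\eqref{item:2}, set $\theta_n(p):=\P_p(r_\Psi(0)\ge n)$. Applying Russo's formula for the increasing event $\{r_\Psi(0)\ge n\}$ and conditioning, on each realisation with $r_\Psi(0)<n$, on the random set $S_\omega:=C_+(0)\cup\{0\}$ (which satisfies $0\in S_\omega$ and $\sup_{S_\omega}\Psi=r_\Psi(0)<n<+\infty$), the pivotal edges are the boundary edges $(u,v)\in\partial^+S_\omega$ whose opening makes some vertex with $\Psi\ge n$ reachable from $0$. Since $\{C_+(0)=S\}$ only constrains edges with an endpoint in $S$, the other edges are still iid Bernoulli$(p)$, and a direct computation---summing over pivotal $(u,v)$, factorising the conditional probability via translation invariance, and recognising the quantity $\phi_p(S)$---produces
\begin{equation*}
\theta_n'(p)\ge \frac{1}{p(1-p)}\,\mathbb{E}_p\bigl[\phi_p(S_\omega)\,\mathbf{1}_{\{r_\Psi(0)<n\}}\bigr]\ge \frac{1-\theta_n(p)}{p(1-p)}\quad\text{for } p>\tilde p_c(\Psi),
\end{equation*}
where the second inequality uses $\phi_p(S_\omega)\ge 1$ almost surely on $\{r_\Psi(0)<n\}$, by the very definition of $\tilde p_c(\Psi)$. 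Integrating between $\tilde p_c(\Psi)$ and $p$ gives $1-\theta_n(p)\le \frac{(1-p)\tilde p_c(\Psi)}{p(1-\tilde p_c(\Psi))}$, and letting $n\to\infty$ delivers the stated lower bound on $\P_p(r_\Psi(0)=+\infty)$. Finally, \eqref{item:1} forces $p_c(\Psi)\ge \tilde p_c(\Psi)$ and \eqref{item:2} the reverse inequality, so $p_c(\Psi)=\tilde p_c(\Psi)$.

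The main obstacle is the conditional independence bookkeeping in both parts, most delicately in~\eqref{item:2}: one has to verify that, after Russo's formula and conditioning on $S_\omega=S$, the sum over pivotal boundary edges reproduces exactly $\phi_p(S_\omega)$ (times a factor which, once integrated, becomes the $1-\theta_n$ on the right-hand side). This leans on two ingredients specific to our framework: the event $\{C_+(0)=S\}$ constrains only edges incident to $S$, leaving the forward reach of the external endpoint $v$ of each boundary edge governed by a fresh iid Bernoulli$(p)$ family; and translation invariance of the oriented graph lets one pass from reach probabilities based at $v$ to reach probabilities based at $0$.
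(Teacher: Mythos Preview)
Your argument for~\eqref{item:1} is the same Duminil-Copin--Tassion recursion as in the paper, and the conclusion is correct, but the independence justification you give is not quite right: after revealing all edges with an endpoint in $S$, the path from $v$ to a point with $\Psi\ge n$ may still use revealed edges, so the relevant event is not measurable with respect to the residual graph. The clean fix is the one in the paper: keep only the restricted event $\{v\stackrel{S^c}{\rightarrow}\Lambda_n^c\}$ (path with all vertices in $S^c$) in the union bound; this depends only on edges with tail in $S^c$ and is therefore independent of $\{0\stackrel{S}{\rightarrow}u\}$ and $\{\omega_{(u,v)}=1\}$, while the subadditivity estimate on $\Psi$ still goes through.

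Your argument for~\eqref{item:2} has a genuine gap: conditioning on the \emph{forward} cluster $S_\omega=C_+(0)$ does \emph{not} produce $\phi_p(S_\omega)$ in the Russo computation. Given $C_+(0)=S$ with $\sup_S\Psi<n$, an edge $(u,v)\in\partial^+S$ is pivotal precisely when $v\stackrel{S^c}{\rightarrow}\Lambda_n^c$, so the conditional expected number of pivotals is
\[
\sum_{(u,v)\in\partial^+S}\P_p\bigl(v\stackrel{S^c}{\rightarrow}\Lambda_n^c\bigr),
\]
which is not $\phi_p(S)/p=\sum_{(u,v)\in\partial^+S}\P_p(0\stackrel{S}{\rightarrow}u)$. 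Indeed, once you condition on $C_+(0)=S$, the event $0\stackrel{S}{\rightarrow}u$ holds automatically for every $u\in S$, so that factor is nowhere to be found. In the unoriented DCT proof the two natural conditionings coincide because $\leftrightarrow$ is symmetric; in the oriented setting they do not. The paper instead conditions on the \emph{backward} set $S_n=\{x:\,x\not\to\Lambda_n^c\}$ (Lemma~\ref{conditionne}): on $\{0\in S_n\}$ with $S_n=S$, an edge $(u,v)\in\partial^+S$ is pivotal iff $0\stackrel{S}{\rightarrow}u$ (since $v\notin S$ already reaches $\Lambda_n^c$), and summing gives exactly $\phi_p(S)/p$. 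Because $S_n\subset\Lambda_n$, the definition of $\tilde p_c(\Psi)$ then yields $\phi_p(S_n)\ge 1$ for $p>\tilde p_c(\Psi)$. A second, more technical point you skip: $\{0\to\Lambda_n^c\}$ depends on infinitely many edges, so Russo's formula is only available as the inequality $\liminf_{h\to0^+}\frac{f_n(p+h)-f_n(p)}{h}\ge\sum_e\P_p(e\text{ pivotal})$, and the paper integrates this via an analysis lemma for left-upper-semicontinuous functions (Lemmas~\ref{proba} and~\ref{analyse}).
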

Note that $ \Psi_u(x)=\langle u,x\rangle$ is linear and thus subadditive, and, for this map, $ p_c(\Psi_u)=p_c(u)$. 

\begin{proof}
$\bullet$ At first, let us prove that~\eqref{item:1} and~\eqref{item:2}   imply $\tilde p_c(\Psi)=p_c(\Psi)$.
If $p<\tilde{p}_c(\Psi)$, then for each $n\ge 1$, we have $\P_p(r_\Psi(0)=+\infty)\le \P_p(r_\Psi(0)\ge n)\le e^{-cn}$; letting $n$ go to infinity, we get $\P(r_\Psi(0)=+\infty)=0$. So $\tilde{p}_c(\Psi)\le p_c(\Psi)$.
But~\eqref{item:2} implies that  $\P_p(r_\Psi(0)=+\infty)>0$ for $p> \tilde{p}_c(\Psi)$, thus  $\tilde p_c(\Psi)\ge p_c(\Psi)$.

\medskip
$\bullet$  Proof of~\eqref{item:1}: it is very similar to Duminil-Copin--Tassion~\cite{MR3477351,MR3605816,MR3783562}. Since it is short, we give it to stay self-contained.

Let $p<\tilde{p}_c(\Psi)$. By the very definition of $\tilde{p}_c(\Psi)$, we can find $S \subset \Zd$
that contains the origin and such that $\phi_p(S)<1$ and $\sup_S{\Psi}<+\infty$. Fix a positive integer $L\ge \sup_{S \cup  \text{Dir}} {\Psi}$.
We set 
$$\Lambda_n=\{x\in\Zd: \; \Psi(x)\le n\}.$$
 Thus, $\{r_\Psi(0)>n\}=\{0\to\Lambda_n^c\}$. For $k \ge 1$, an open path starting from $0$ and escaping from $\Lambda_{kL}$ eventually leaves $S$. Then,
\begin{align*}
  \{0\to  \Lambda_{2kL}^c\}=\miniop{}{\cup}{(x,y)\in\partial^+ S}\{0\communiqueS x,\omega_{(x,y)}=1,y \stackrel{S^c}{\rightarrow}\Lambda_{2kL}^c\}
    \end{align*}
By independence, we get
\begin{align*}
\P_p(r_\Psi(0)>2kL) &\le \miniop{}{\sum}{(x,y)\in\partial^+ S} \P_p(0\communiqueS x) \, p\,  \P_p(y \stackrel{S^c}{\rightarrow}\Lambda_{2kL}^c).
\end{align*}
Note that 
\begin{itemize}
\item If $(x,y)\in\partial^+ S$, then $\Psi(y)\le \Psi(x)+\max_{\text{Dir}} \Psi \le 2L$;
\item $\{ y \stackrel{S^c}{\rightarrow}\Lambda_{2kL}^c\}  \subset \{\exists z \in C_+(y): \; \Psi(z)>2kL \}$; 
\item thus if $(x,y)\in\partial^+ S$ and $z \in C_+(y)$ is such that $\Psi(z)>2kL$, then
$$\Psi(z-y) \ge \Psi(z)-\Psi(y) >2kL-2L =2(k-1)L.$$
\end{itemize}
We thus obtain
\begin{align*}
\P_p(r_\Psi(0)>2kL)  &\le \miniop{}{\sum}{(x,y)\in\partial^+ S}\P_p(0\communiqueS x) \, p \, \P_p(r_\Psi(y)> 2(k-1)L)\\
&\le\phi_p(S)\P_p(r_\Psi(0)> 2(k-1)L)
\end {align*}
It follows that $\P_p(r_\Psi(0)>2kL)\le\phi_p(S)^k$, which gives the desired result.
\end{proof}

\medskip
$\bullet$  Proof of~\eqref{item:2}. In Duminil-Copin--Tassion, the idea is to use the Russo inequality.
It is a  bit more tricky here, because the events $\{0\communique\partial\Lambda_n\}$, which  correspond to the exit of finite boxes in Duminil-Copin--Tassion, now depend on infinitely many bonds. The proof is cut into three lemmas. 

We begin with a lemma on a general graph.

\begin{lemme}\label{conditionne}
 Let $G=(V,E)$ be an oriented graph with $V$ finite or denumerable.
Let $\P$ denote a Bernoulli product on $\{0,1\}^E$. Let $X$ and $Y$ be disjoint subsets of $V$, with $\P(X\to Y)>0$.
For each  $S\subset V$, and each $(x,y)\in E$, we set
\begin{align*}
r_X^{(x,y)}(S) &=\1_{X \subset S} \1_{(x,y)\in\partial^+ S}\P(X\communiqueS x).
\end{align*}
We denote by $\mathcal{T}_Y$ the $\sigma$-field generated by the events $\{x\to Y\}$, for $x\in V$.
We denote by  $B_Y$ the random subset of $V$ composed by the points that are not linked to~$Y$.

Remember that $e\in E$ is said to be pivotal for an event $A\in\mathcal{B}(\{0,1\}^E)$ in the configuration $\omega\in \{0,1\}^E$ if $\1_A(0_e\omega_{E\backslash\{e\}})\ne \1_A(1_e\omega_{E\backslash\{e\}})$.

Then, for any $e \in E$,
$$\P(e\text{ pivotal for }X\to Y, X\not\to Y \; | \; \mathcal{T}_Y)=r_X^e(B_Y).$$
\end{lemme}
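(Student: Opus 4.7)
The plan is to condition on the value of the random set $B_Y$ and reduce the claim to a deterministic identity. Since $\mathcal{T}_Y=\sigma(B_Y)$, both sides of the target identity are $\mathcal{T}_Y$-measurable (the right-hand side is a deterministic function of $B_Y$), so it is enough to check, for every $S\subset V$, that
\begin{equation*}
\P\bigl(A_e\cap\{B_Y=S\}\bigr)\;=\;r_X^e(S)\,\P(B_Y=S),
\end{equation*}
where $A_e:=\{e\text{ is pivotal for }X\to Y\}\cap\{X\not\to Y\}$. Because $\{X\not\to Y\}=\{X\subset B_Y\}$, both sides vanish unless $X\subset S$, so one may restrict to $X\subset S$.

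Next I would unpack the pivotality event. Writing $e=(x,y)$, on $\{X\not\to Y\}$ pivotality forces $\omega_e=0$, because closing an already open edge cannot create new open paths. In the configuration obtained by flipping $\omega_e$ to $1$, every open $X$-to-$Y$ path must use $e$; this yields, in the original $\omega$, an open path from $X$ to $x$ and an open path from $y$ to $Y$. The first path visits only vertices reachable from $X$, hence, since $X\not\to Y$, only vertices of $B_Y$; in particular $x\in B_Y$. The existence of the second path gives $y\in B_Y^c$. These conditions are also sufficient, so on $\{B_Y=S,\,X\subset S\}$ one has
\begin{equation*}
A_e\;=\;\{e\in\partial^+S\}\cap\{\omega_e=0\}\cap\{X\communiqueS x\}.
\end{equation*}
If $e\notin\partial^+S$ both sides vanish; otherwise $e\in\partial^+S$.

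The decisive step is an independence argument inside the Bernoulli product. The event $\{B_Y=S\}$ is measurable with respect to the edges having at least one endpoint in $V\setminus S$: it forces every edge of $\partial^+S$ to be closed, and requires every $v\in V\setminus S$ to be linked to $Y$. Since $Y\subset V\setminus S$ (each $v\in Y$ reaches itself via the empty path) and since all exit edges of $S$ are closed, any certifying path for $v\to Y$ can be chosen entirely inside $V\setminus S$, so only edges having both endpoints in $V\setminus S$ enter this condition. Consequently $\{B_Y=S\}$ is independent of the edges having both endpoints in $S$, while $\{X\communiqueS x\}$ depends only on those edges because $X\cup\{x\}\subset S$. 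Using that $\{B_Y=S\}$ implies $\{\omega_e=0\}$ whenever $e\in\partial^+S$, independence gives
\begin{equation*}
\P\bigl(A_e\cap\{B_Y=S\}\bigr)\;=\;\P(X\communiqueS x)\,\P(B_Y=S)\;=\;r_X^e(S)\,\P(B_Y=S).
\end{equation*}

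The main obstacle is precisely this last measurability claim. It hinges on the joint observations that $Y$ lies outside $B_Y$ and that the closedness of the exit edges of $S$ is already encoded in $\{B_Y=S\}$, so that no edge internal to $S$ needs to be consulted in order to verify $B_Y=S$.
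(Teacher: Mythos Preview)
Your pivotality analysis and the independence argument (that $\{B_Y=S\}$ is measurable with respect to the edges touching $V\setminus S$, hence independent of $\{X\communiqueS x\}$) are correct and constitute the heart of the proof, exactly as in the paper. The gap lies in your reduction step. You claim that because $\mathcal{T}_Y=\sigma(B_Y)$, it suffices to verify $\P(A_e\cap\{B_Y=S\})=r_X^e(S)\,\P(B_Y=S)$ for every $S\subset V$. This is only valid if the law of $B_Y$ is purely atomic, so that the events $\{B_Y=S\}$ generate $\mathcal{T}_Y$. When $V$ is infinite this can fail: for instance, on an infinite rooted tree with all edges oriented towards a root in $Y$, the set $B_Y^c$ is a Galton--Watson tree and has a diffuse law in the supercritical regime; then every $\{B_Y=S\}$ is a null event and your displayed identity is vacuous while the conditional expectation statement is not.

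The paper addresses precisely this point. It first reduces the conditional-expectation identity to an integrated identity against a $\pi$-system of cylinder events generating $\mathcal{T}_Y$ (finite intersections of events $\{\omega_f=1\text{ for all }f\in\gamma\}$ over paths $\gamma$ ending in $Y$), and then observes that both sides of that identity are limits, by dominated convergence, of the analogous quantities on finite subgraphs. This reduces the lemma to the case of finite $V$, where your decomposition on the values of $B_Y$ is legitimate, and from there the paper concludes by the same independence mechanism you describe (with a slightly finer bookkeeping of edges into sets $E_1,E_2,E_3$).
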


\begin{proof}[Proof of Lemma \ref{conditionne}]
Let us denote by $\Gamma$ the set of oriented paths in $Y^ c$ from a point in $Y^c$ to a point in $Y$. 
Then  the subsets $\cap_{\gamma \in A}\cap_{e \in\gamma}\{\omega_e=1\}$, for $A \subset \Gamma$,  form a $\pi$-system that generates $\mathcal{T}_Y$, so  it is enough to prove that for each  $A \subset \Gamma$, one has 
 \begin{align}
 \label{amtr}
 \P \left( \begin{array}{c} 
 e\text{ pivotal for }X\to Y, X\not\to Y, \\
 \forall \gamma\in A,\, \forall f \in \gamma, \, \omega_f=1
 \end{array} \right) &=\E \left( r_X^e(B_Y)\prod_{\gamma\in A}\prod_{f\in \gamma}\omega_f \right).
\end{align}
The quantities that appear on each side of~\eqref{amtr} are the limit of analogous quantities for a sequence of finite subgraphs of $G$.
 So, by dominated convergence, it is sufficient to prove~\eqref{amtr} for a finite graph. From now on, we assume that $G$ is finite.
 
 Decomposing on the (finite number of) possible values of $B_Y$, we thus only have to prove that for any subset $S$ of vertices such that  $X \subset S \subset Y^c$, 
\begin{align*}
&  \P \left( \begin{array}{c} 
 e\text{ pivotal for }X\to Y, \; B_Y=S \\
 \forall \gamma\in A,\, \forall f \in \gamma, \, \omega_f=1
 \end{array} \right) =\E \left( r_X^e(S) \1_{\{B_Y=S\}}\prod_{\gamma\in A}\prod_{f\in \gamma}\omega_f \right). 
  \end{align*}
Fix a set  $S$ such that  $X \subset S \subset Y^c$. Let us denote by
 \begin{align*}
E_1 & = \{ (x,y) \in E: \; x,y \in S\}, \\
E_2 &  =\partial^+ S = \{ (x,y) \in E: \; x \in S, \, y \in S^ c\}, \\
E_3 & =  \{ (x,y) \in E \backslash (E_1 \cup E_2): \;\exists (u,v) \in \partial^+ S, \,   \P_1(v  \stackrel{S^c}{\rightarrow} x,y \stackrel{S^c}{\rightarrow}Y)=1\}.
\end{align*}
Note that on the event $B_Y=S$, as $X \subset S$, pivotal edges for $X \to Y$ are necessarily in $E_2$ and that when $e \notin E_2$, both members vanish. 
The event $B_Y=S$ is measurable with respect to the states of the edges in $E_2 \cup E_3$, and implies that all edges in $E_2$ are closed.  Thus both members vanish  if $A \not\subset E_3$. Denote by $A_3$ the set of possible configurations  of edges in $E_3$ that correspond to $B_Y=S$. Finally, we thus have to prove that for any $S$ such that $X \subset S \subset Y^c$, for any $e=(x,y) \in E_2$, for any $\xi \in A_3$, 
 \begin{align*}
&  \P \left( \begin{array}{c} 
 e\text{ pivotal for }X\to Y, \\
 \forall f\in E_3,\omega_f=\xi_f, \; B_Y=S
 \end{array} \right) =\E \left( r_X^e(S) \1_{\{B_Y=S\}} \prod_{f\in E_3}\1_{\{\omega_f=\xi_f\}} \right). 
  \end{align*}
But now, by independence,
 \begin{align*}
&  \P \left( \begin{array}{c} 
 e\text{ pivotal for }X\to Y, \\
 \forall f\in E_3,\omega_f=\xi_f, \; B_Y=S
 \end{array} \right)
   = \P \left( \begin{array}{c} 
X\stackrel{S}{\rightarrow} x , \, \forall f \in E_2, \, \omega_f=0 \\
 \forall f\in E_3,\omega_f=\xi_f
 \end{array} \right) \\
= & \P(X\stackrel{S}{\rightarrow} x)   \P(\forall f \in E_2, \, \omega_f=0, \,
 \forall f\in E_3,\omega_f=\xi_f)   \\
 = & \P(X\stackrel{S}{\rightarrow} x)   \P(B_Y=S, \,
 \forall f\in E_3,\omega_f=\xi_f),
  \end{align*}
which is indeed the mean value of $ r_X^e(S) \1_{\{B_Y=S\}} \prod_{f\in E_3}\1_{\{\omega_f=\xi_f\}}$.
\end{proof}

We come back to the case of a graph on $\Zd$.
  
\begin{lemme}\label{proba}Let $p\in [0,1]$.  
 For every natural number $n$, we set $f_n(p)=\P_p(0 \to \Lambda_n^c)$ and
  $c_n=\inf_{S\subset\Lambda_n,0\in S}\phi_p(S)$.
  Then, for each $p\in[0,1[$.
  $$\liminf_{h\to 0^+}\frac{f_n(p+h)-f_n(p)}h\ge\frac1{p(1-p)} c_n (1-f_n(p)).$$
\end{lemme}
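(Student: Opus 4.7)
The plan is to apply a Russo-type differential inequality combined with Lemma~\ref{conditionne}, which rewrites the conditional probability that an edge is pivotal in terms of $r_X^e(B_Y)$. Summing over edges converts the Russo sum into an expectation of $\phi_p(B_Y)/p$, and the infimum defining $c_n$ supplies the lower bound. The only non-standard ingredient is that $\{0\to\Lambda_n^c\}$ may depend on infinitely many edges (since $\Lambda_n$ can be unbounded, for instance a half-space when $\Psi$ is linear), so Russo's formula is applied to cylinder approximations and one then passes to the limit.

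Set $A=\{0\to\Lambda_n^c\}$ and, for $k\ge 1$, let $A_k=\{0\to\Lambda_n^c\text{ via an open path of length at most }k\}$ and $f_{n,k}(p)=\P_p(A_k)$. Each $A_k$ is a cylinder (it depends only on the finitely many edges in the ball of radius $kM$ around $0$, with $M=\max_{z\in\text{Dir}}|z|$), and $A_k\uparrow A$, so $f_{n,k}\uparrow f_n$. By Russo's formula, combined with the identity $\P_p(e\text{ pivotal for }A_k,\,A_k^c)=(1-p)\P_p(e\text{ pivotal for }A_k)$ (valid for increasing events because pivotality does not depend on $\omega_e$, and $A_k^c$ combined with pivotality forces $\omega_e=0$),
$$f_{n,k}'(p)=\frac{1}{1-p}\sum_{e}\P_p(e\text{ pivotal for }A_k,\,A_k^c).$$
Applying Lemma~\ref{conditionne} to the corresponding finite subgraph with $X=\{0\}$ and $Y=\Lambda_n^c$ restricted to the window, then taking expectations and summing over $e=(x,y)$, the right-hand sum becomes
$$\mathbb{E}\Bigl[\1_{0\in B^{(k)}}\sum_{(x,y)\in\partial^+ B^{(k)}}\P_p(0\stackrel{B^{(k)}}{\rightarrow}x)\Bigr]=\frac{1}{p}\,\mathbb{E}\bigl[\1_{A_k^c}\phi_p(B^{(k)})\bigr],$$
where $B^{(k)}$ is the random set of vertices in the window not linked to $Y$ in the finite subgraph.

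On $A_k^c=\{0\in B^{(k)}\}$, the cluster $B^{(k)}\ni 0$ is contained in $\Lambda_n$ (since every $y\in Y=\Lambda_n^c$ is trivially linked to itself, hence $Y\cap B^{(k)}=\emptyset$), so $B^{(k)}$ is admissible in the infimum defining $c_n$ and $\phi_p(B^{(k)})\ge c_n$. Substituting yields $f_{n,k}'(p)\ge c_n(1-f_{n,k}(p))/[p(1-p)]$; integrating over $[p,p+h]$ and letting $k\to\infty$ by monotone convergence ($1-f_{n,k}\downarrow 1-f_n$) gives
$$f_n(p+h)-f_n(p)\ge\int_p^{p+h}\frac{c_n\bigl(1-f_n(r)\bigr)}{r(1-r)}\,dr.$$
Dividing by $h$ and letting $h\to 0^+$ concludes: at each point of right-continuity of $f_n$ (a co-countable set, since $f_n$ is increasing), the right-hand side tends to $c_n(1-f_n(p))/[p(1-p)]$; at a right-jump, the left-hand $\liminf$ is $+\infty$ and the inequality is automatic. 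The main obstacle is the truncation bookkeeping, specifically matching $\partial^+_{G_k}B^{(k)}$ with $\partial^+ B^{(k)}$: this is handled by adding a buffer of size $M$ around the window of radius $kM$, so that every edge leaving a vertex in $B^{(k)}$ has both endpoints in the finite subgraph; the rest is Duminil-Copin--Tassion's template.
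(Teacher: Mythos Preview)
Your strategy is the same as the paper's: Russo-type inequality, then Lemma~\ref{conditionne} to turn the sum of pivotal probabilities into $\mathbb{E}_p[\1_{\{0\not\to\Lambda_n^c\}}\,\phi_p(S_n)]/p$, then the pointwise bound $\phi_p(S_n)\ge c_n$. The difference is only in where the infinite-edge dependence is handled. The paper quotes the generalized Russo inequality for increasing events depending on infinitely many edges (Grimmett, Theorem~2.25), obtaining directly
\[
\liminf_{h\to 0^+}\frac{f_n(p+h)-f_n(p)}{h}\ \ge\ \sum_{e\in E}\P_p(e\text{ pivotal for }0\to\Lambda_n^c),
\]
and then applies Lemma~\ref{conditionne} in the full (denumerable) graph with $Y=\Lambda_n^c$; no finite-window bookkeeping is needed at this stage.

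Your truncation route has a gap exactly at the point you flag. On $A_k^c$ the random set $B^{(k)}$ can reach the boundary of the finite window $V_k$ (a vertex deep inside $\Lambda_n$ but near $\partial V_k$ need not reach $\Lambda_n^c\cap V_k$ within $G_k$); the buffer does nothing for such vertices. Hence $\partial^+_{G_k}B^{(k)}\subsetneq\partial^+ B^{(k)}$ in general, and the Russo sum in $G_k$ equals the truncated quantity $\phi_p^{G_k}(B^{(k)})/p$, which is $\le \phi_p(B^{(k)})/p$. Since $c_n$ is defined through the full $\phi_p$, the step ``$B^{(k)}$ is admissible, so $\phi_p(B^{(k)})\ge c_n$'' does not give $\phi_p^{G_k}(B^{(k)})\ge c_n$, and the differential inequality for $f_{n,k}$ is not established. (A secondary issue: with $A_k$ defined via path length $\le k$, it is not literally of the form $\{X\to Y\}$ in a finite subgraph, so Lemma~\ref{conditionne} does not apply verbatim; you would need $A_k=\{0\to\Lambda_n^c\cap V_k\text{ in }G_k\}$.) The cleanest fix is precisely the paper's: use the generalized Russo inequality and apply Lemma~\ref{conditionne} directly in the infinite graph, so that the full $\phi_p(S_n)$ appears and the bound by $c_n$ is immediate.
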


\begin{proof}[Proof of Lemma~\ref{proba}]
  The event $\{0\to \Lambda^c_n\}$ depends on infinitely many bonds, so one can not directly apply the Russo formula.
  However, since $\{0\to \Lambda_n^c\}$ is an increasing event, the following inequality is preserved (see for example Grimmett~\cite{grimmett-book}, page 43):
\begin{align*}
  \liminf_{h\to 0^+}\frac{f_n(p+h)-f_n(p)}h&\ge \sum_{e\in E}\P(e\text{ is pivotal for } 0 \to\Lambda^c_n ) \\
  &=\sum_{e\in E}\frac1{1-p}\P(e\text{ is pivotal for } 0 \to\Lambda^c_n,0\not \to\Lambda^c_n)  
\end{align*}
Now consider the random set  $S_n$ of points from which  $\Lambda_n^c$ can not be reached.
Note that $\{0\not\to\Lambda_n^c\}=\{0\in S_n\}$.
For each $S\subset\Zd$ and $(x,y)\in E$, we define the random variable
\begin{align*}r^{(x,y)}_p(S)&=\1_{(x,y)\in\partial^+ S}\P_p(0\communiqueS x).
  \end{align*}
Integrating the result of Lemma~\ref{conditionne}, we have for each $e\in E$: 
$$\P(e\text{ is pivotal for } 0 \to\Lambda^c_n, 0\not \to\Lambda^c_n)=\mathbb E_p\left( \1_{0 \in S_n} r^e_p(S_n)\right).$$
Then, we get
\begin{align*}
  \sum_{e\in E}\mathbb E_p\left( \1_{0 \in S_n}  r^e_p(S_n)\right)&=\mathbb E_p \left( \1_{ \{0\not \to\Lambda^c_n \} } \sum_{e\in E}r^e_p(S_n) \right) = \mathbb E_p \left(  \1_{ \{0\not \to\Lambda^c_n \} } \frac{\phi_p(S_n)}p \right) \\
  &\ge \mathbb E_p\left( \1_{ \{0\not \to\Lambda^c_n \} } \frac{c_n}p\right)=c_n\frac{1-f_n(p)}p,
\end{align*}
which gives the desired inequality.
\end{proof}

\begin{lemme}
\label{analyse}
Let $I\subset\R$ be an open interval of $\R$ and let $f$ and $h$ be real valued  functions defined on $I$ and such that
\begin{itemize}
\item $f$ is left upper semi-continuous on $I$ from the left: $\forall x \in I, \; f(x)\ge\liminf_{t\to x^-}f(t)$;
\item $h$ is continuous on $I$
\item For each $x\in I$
$$\liminf_{t\to 0^+}\frac{f(x+t)-f(x)}t\ge h(x).$$
\end{itemize}
Then, for any $a$ and $b$ in $I$ with $a\le b$, we have
$\displaystyle f(b)-f(a)\ge\int_a^b h(x)\ dx.$
\end{lemme}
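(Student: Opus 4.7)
The plan is to convert the one-sided Dini lower bound into an honest integrated inequality by subtracting the target integral and adding an $\epsilon$-slope to make the derivative bound strictly positive. Fix $\epsilon>0$ and set
$$g(x) := f(x) - f(a) - \int_a^x h(t)\,dt + \epsilon(x-a)$$
for $x \in I$. Since $h$ is continuous, the maps $x\mapsto \int_a^x h(t)\,dt$ and $x\mapsto \epsilon(x-a)$ are $C^1$, so $g$ inherits from $f$ the left semi-continuity $g(x)\ge \liminf_{t\to x^-} g(t)$, and the hypothesis upgrades to $\liminf_{t\to 0^+} (g(x+t)-g(x))/t \ge \epsilon$ at every $x\in I$.

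Next I would introduce the ``good initial segment'' set $T := \{x\in[a,b] : g \ge 0 \text{ on } [a,x]\}$, note that $a\in T$, and set $c := \sup T$. For every $t<c$ one can find $x\in T$ with $t\le x$, whence $g(t)\ge 0$, so $g\ge 0$ on $[a,c)$; this forces $\liminf_{t\to c^-} g(t) \ge 0$, and the left semi-continuity hypothesis then gives $g(c)\ge 0$. Hence $c$ itself lies in $T$.

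The key step is to rule out $c<b$. Assuming it, the strict Dini bound $\liminf_{t\to 0^+}(g(c+t)-g(c))/t \ge \epsilon$ furnishes a $t_0\in(0,b-c]$ with $g(c+t)\ge g(c) + (\epsilon/2)t \ge 0$ for every $t\in(0,t_0]$. Combined with $g\ge 0$ on $[a,c]$, this gives $g\ge 0$ on $[a,c+t_0]$, so $c+t_0 \in T$, contradicting the maximality of $c$. Therefore $c=b$, so $g(b)\ge 0$, which rearranges to $f(b)-f(a)\ge \int_a^b h(t)\,dt - \epsilon(b-a)$; letting $\epsilon\to 0^+$ yields the conclusion.

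The one subtle point I foresee is that the left semi-continuity is stated with $\liminf$ rather than the customary $\limsup$ formulation of upper semi-continuity, which is why $T$ must be defined through the \emph{entire} initial segment $[a,x]$ rather than by the pointwise condition $g(x)\ge 0$. With this choice, non-negativity throughout $[a,c)$ automatically forces $\liminf_{t\to c^-} g(t)\ge 0$ and the argument closes; every other step is standard.
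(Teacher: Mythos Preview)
Your proof is correct and follows essentially the same approach as the paper: both add an $\epsilon$-slope to the auxiliary function $f(x)-\int_a^x h$, use the strict Dini lower bound to push forward locally, and invoke the left semi-continuity hypothesis to close the argument at the critical point. The only cosmetic difference is that the paper takes $c=\inf\{x:F_\epsilon(x)<F_\epsilon(a)\}$ and derives a contradiction, while you take $c=\sup T$ and show $c=b$; these are dual packagings of the same idea, and your remark about why the $\liminf$ (rather than $\limsup$) formulation of the hypothesis still suffices is well observed.
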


\begin{proof}[Proof of Lemma \ref{analyse}]
Let $a,b\in I$ with $a<b$. We fix $\epsilon>0$ and define on $[a,b]$:
$F_{\epsilon}(x)=f(x)-\int_a^x h(t)\ dt +\epsilon x$.
It is sufficient to prove that  $F_{\epsilon}$ is non-decreasing for each $\epsilon>0$.
Indeed, it will imply that
$$f(b)-\int_a^b h(t)\ dt +\epsilon b=F_{\epsilon}(b)\ge F_{\epsilon}(a)=f(a)+\epsilon a,$$
which gives the lemma when  $\epsilon$ tends to $0$.

Let $x\in [a,b]$. By definition of $F_\epsilon$, 
$$\liminf_{t\to 0^+} \frac{F_{\epsilon}(x+t)-F_{\epsilon}(x)}t=\liminf_{t\to 0^+}\frac{f(x+t)-f(x)}t-h(x)+\epsilon\ge \epsilon.$$
So there exists $\eta_x>0$ such that for any $t \in (0,\eta_x)$,  $\frac{F_{\epsilon}(x+t)-F_{\epsilon}(x)}t\ge \epsilon/2\ge 0$.

Let $B=\{x\in [a,b]: \; F_{\epsilon}(x)<F_{\epsilon}(a)\}$.
Assume by contradiction that  $B\ne\varnothing$ and define $c=\inf B$. Let $(x_n)$ be a sequence in $B$ that tends to $c$.
By the previous observation, the inequality $F_{\epsilon}(x_n)\ge F_{\epsilon}(c)$ holds for $n$ large enough.
Since $x_n\in B$, by definition of $B$,   $F_\epsilon(a)>F_{\epsilon}(x_n)$. Thus $F_\epsilon(a)>F_{\epsilon}(c)$.

As $F_{\epsilon}$ is the sum of a function which is  upper semi-continuous from the left and of a continuous function,
it is still  upper semi-continuous from the left. So
$$F_{\epsilon}(c)\ge\liminf_{t\to c^-} F_{\epsilon}(t),$$
and by definition of $c$, $F_{\epsilon}(t)\ge F(a)$ for each $t\in ]a,c[$, so $F_{\epsilon}(c)\ge F_{\epsilon}(a)$. This brings a contradiction.
\end{proof}

\begin{proof}[End of the proof of Theorem \ref{thm:perco}: proof of~\eqref{item:2}]
Fix $p'\in ]\tilde{p}_c(\Psi),1[$ and define on $[0,1)$ the function $g(x)=-\log(1-x)$: it is non-decreasing and convex.

Let $p\in[ p', 1)$ and $h\in (0, 1-p)$:
$$\frac{g(f_n(p+h))-g(f_n(p))}{f_n(p+h)-f_n(p)}\frac{f_n(p+h)-f_n(p)}{h}\ge g'(f_n(p)) \frac{f_n(p+h)-f_n(p)}{h}.$$
With  Lemma \ref{proba} (note that as $p>\tilde{p}_c(\Psi)$, $c_n\ge 1$), we obtain that
$$\liminf_{h\to 0^+}\frac{g(f_n(p+h))-g(f_n(p))}h\ge\frac{c_n}{p(1-p)}\ge\frac{1}{p(1-p)}.$$
We can now apply Lemma~\ref{analyse} on $[p',1[$: as $f_n$ is non-increasing, $g\circ f_n$ is non-decreasing, so it is clearly  upper semi-continuous from the left: for any $p>p'$
$$g(f_n(p))\ge g(f_n(p))-g(f_n(p'))\ge\int_{p'}^{p} \frac{dx}{x(1-x)}=\log\frac{p(1-p')}{p'(1-p)}=g\left(\frac{p-p'}{p(1-p')}\right) .$$
It follows that $f_n(p)\ge \frac{p-p'}{p(1-p')}$, then, letting $p'$ tend to $\tilde{p}_c(\Psi)$, we get
$$f_n(p)\ge \frac{p-\tilde{p}_c(\Psi)}{p(1-\tilde{p}_c(\Psi))}.$$ 
Finally, we obtain~\eqref{item:2} by  letting $n$ go to
infinity.
\end{proof}

\section{Links with first-passage percolation}

\subsection{Percolation and first-passage percolation on the (unoriented) edges of~$\Zd$}

Consider first $\Zd$ endowed with the set $E_d$ of edges between nearest neighbors. In the first-passage percolation model, iid non negative and  integrable random variables $(t_e)_{e \in E_d}$ are associated to edges. Let us denote by $\nu$ their common law. We refer the reader to the recent review paper on first passage percolation by Damron et al~\cite{MR3729447}.
For each path $\gamma$ in the graph $(\Zd,E_d)$, we define
\begin{align}
\label{DEF:t}
t(\gamma)=\sum_{e\in\gamma} t_e, \quad \text{and} \quad \forall x,y \in \Zd, \; t(x,y)=\inf_{\gamma:x \to y} t(\gamma),
\end{align}
that can been seen as a random pseudo-distance on $\Zd$. Using Kingman's subadditive ergodic theorem allows to define
\begin{align}
\label{DEF:mu}
 \forall  x \in \Zd \quad \mu_{\nu}(x)&=\lim_{n\to +\infty} \frac{t(0,nx)}n,
\end{align}
where the limits hold almost surely and in $L^1$. The functional $\mu_{\nu}$ is homogeneous and subadditive, and can be extended to a symmetric semi-norm on $\Rd$. With some extra integrability assumption, we obtain
the analytic form of the asymptotic shape theorem:
\begin{align}
\label{PROP:FA}
\lim_{\|x\|\to +\infty} \frac{t(0,x)-\mu_{\nu}(x)}{\|x\|}&=0\quad \P \text{ a.s.}
  \end{align}
The subadditivity and the symmetries of the lattice imply quite simply that $\mu_{\nu}$ is a norm if and only if it $\mu_{\nu}((1,0,\dots,0))>0$ is strictly positive.
Moreover, it has long been known  (see for example Cox--Durrett~\cite{MR624685} or Kesten~\cite{kesten}) that 
$\mu_{\nu}$ is a norm  if and only  $\nu(\{0\})<p_c(\Zd)$, where $p_c(\Zd)$ is the critical percolation parameter for independent percolation on the edges of $\Zd$.

Our idea here is to find, in oriented percolation on $(\Zd,E)$, an analogous characterization of directions of percolation in terms of the semi-norm for an associated oriented first-passage percolation on $(\Zd,E)$.
Things are necessarily more intricate, since we saw that  for oriented percolation the critical probability may depend on the direction.

\subsection{Oriented percolation and first-passage percolation on $(\Zd,E)$}
We suppose that to each oriented bond $e\in E$ is associated a random variable $t_e$, the $(t_e)$'s being i.i.d.  integrable non-negative random variables, with $\nu$ as common distribution; we  denote by $p$ the probability $p=\P(t_e=0)=\nu(\{0\})$. 

In this section, we assume that the semi-group of $\Zd$ generated by $\text{Dir}$ is the whole set $\Zd$. Then, the graph $(\Zd,E)$ is transitive.

As in the classical setting, we can define the passage time of an oriented path as in \eqref{DEF:t}, use Kingman's subadditive ergodic theorem to define the associated functional $\mu_{\nu}$ as in \eqref{DEF:mu}, which is now positively homogeneous and subadditive but not necessarily symmetric. By sudadditivity, 
$$\forall x,y \in \Zd \quad |\mu_{\nu}(x+y)-\mu_{\nu}(x) | \le \|y\|_1 \max\{\mu(\epsilon e_i): \; 1 \le i  \le d, \; \epsilon \in\{0,1\} \}.$$
Thus $\mu_{\nu}$ can be extended in the usual way to a non-symmetric semi-norm on $\Rd$. Finally, 
we  get, under some extra integrability assumption, the analytic form of the asymptotic shape theorem as in \eqref{PROP:FA}. 

Our hope is to characterize the directions of percolations in $(\Zd,E)$ when edges are open with probability $p$, i.e. the $u\in \Rd$ such that 
$$\displaystyle D_u(0)=\sup_{y\in C_+(0)} \langle y,u\rangle=+\infty$$
with the help of the semi-norm $\mu_{\nu}$ for some law $\nu$  for the passage times of the edges. 
Since the only relevant parameter here is $\nu(\{0\})=p$, we take from now on 
$$\nu_p=p\delta_0+(1-p)\delta_1; $$
we denote by $\mu_p$ the associated semi-norm on $\Rd$  and we set
$$A_{p}=\{x\in\Rd: \; \mu_{p}(x)\le 1\},$$
which is a closed and convex set, but not necessarily bounded. We thus need some basics in the theory of convex sets.

\subsection{Convex sets} 
As $A_p$ is closed and convex, we can associate to $A_{p}$ two non-empty closed convex cones:

$\bullet$ The recession cone\footnote{sometimes called characteristic cone or asymptotic cone} of $A_{p}$ is
$$\displaystyle 0^+(A_{p})=\{u\in\Rd: \; A_{p}+\R_+u\subset A_{p}\}= \{x\in\Rd:\; \mu_{p}(x)=0\}.$$

$\bullet$ The barrier cone of $A_{p}$ is
$$\displaystyle \text{Bar}(A_{p})=\{u\in\Rd: \;  \miniop{}{\sup}{x\in A_{p}} \langle x,u\rangle <+\infty\}=\{x\in\Rd:\; b_p(x)>0\},$$
where $b_p(u)=\inf\{\mu_{p}(x):\;x \in \Rd \text{ such that } \langle u,x\rangle=1\} $.

The polar cone of a closed non-empty convex cone $C$  is defined by
$$C^{\circ}=\{u\in\Rd:\; \forall x\in C\quad  \langle x,u\rangle\le 0\}.$$
The map $C\mapsto C^{\circ}$ is an involutive map in the set of closed non-empty convex cones. Note also that  $C\cap C^{\circ}=\{0\}$.
Here, $0^+ (A_{p})$ is the polar cone associated to $\text{Bar}(A_{p})$ (see Rockafellar~\cite{MR0274683} Corollary 14.2.1 p 123). In other words, 
 characterizing the directions $x\in\Rd$ such that $\mu_{p}(x)=0$ is equivalent to characterizing the  directions $y\in\Rd$ such that  $b_p(y)>0$.

\subsection{Results}

Let us define, for $p\in[0,1]$,
\begin{align*}
\mathrm{BG}(p)=\left\{u \in \Rd: \;\P_p \left( \sup_{y\in C_+(0)} \langle y,u\rangle=+\infty \right)=0\right\}.
\end{align*}
Note that $\mathrm{BG}(p)$ is non-increasing in $p$. The set $\mathrm{BG}(p)$ collects the directions in which the growth of the cluster issued from $0$ is bounded. 
It is thus natural to make the following conjecture:
\begin{conjecture}
$\forall p\in [0,1] \quad  \mathrm{Bar}(A_{p})= \mathrm{BG}(p).$
\end{conjecture}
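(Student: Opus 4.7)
The plan is to recast the conjecture via polar duality. Using Rockafellar's relation $\text{Bar}(A_p)=(0^+(A_p))^{\circ}$ recalled in the paper, $u\notin\text{Bar}(A_p)$ is equivalent to the existence of $v\in\Rd$ with $\mu_p(v)=0$ and $\langle u,v\rangle>0$, which is the same as $b_p(u)=0$; and $u\notin\mathrm{BG}(p)$ is equivalent to $\theta_u(p):=\P_p(D_u(0)=+\infty)>0$. The conjecture therefore reduces to the equivalence
\begin{equation*}
b_p(u)=0\quad\Longleftrightarrow\quad \theta_u(p)>0.
\end{equation*}

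For the forward direction $\theta_u(p)>0\Rightarrow b_p(u)=0$, the key preliminary is a linear-growth estimate: on the event $\{D_u(0)=+\infty\}$ of positive probability, one can choose $y_n\in C_+(0)$ with $\langle u,y_n\rangle\ge n$ and $\|y_n\|\le Cn$ for a deterministic $C=C(u,p)$. This is an oriented analog of Durrett's shape theorem for supercritical oriented percolation and should follow from a Kingman / restart argument on the hitting-length functional $\inf\{|\gamma|:\gamma\text{ open oriented path from }0\text{ hitting }\{\langle\cdot,u\rangle\ge n\}\}$, combined with the exponential sharpness provided by Theorem~\ref{thm:perco}. Given such a sequence, $t(0,y_n)=0$ and the analytic shape theorem~\eqref{PROP:FA} give $\mu_p(y_n)=o(\|y_n\|)=o(n)$; extracting a subsequential limit $v$ of $y_n/n$ (precompact since $\|y_n/n\|\le C$ and $\langle u,y_n/n\rangle\ge 1$) and using continuity of the semi-norm $\mu_p$ yields $\mu_p(v)=0$ together with $\langle u,v\rangle\ge 1$, whence $b_p(u)=0$.

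The reverse implication $b_p(u)=0\Rightarrow \theta_u(p)>0$ is the main obstacle. Choose $v\in\Zd$ with $\mu_p(v)=0$ and $\langle u,v\rangle>0$ (by homogeneity and rational approximation); the hypothesis says only that $t(0,Nv)=o(N)$ in probability, i.e.\ one can reach $Nv$ using a sublinear number of closed edges, but not that a fully open path to $Nv$ exists. To upgrade such near-open paths into a genuinely open infinite cluster in direction $u$, the natural strategy is a Grimmett--Marstrand renormalization: partition $\Zd$ into mesoscopic blocks of side $N$, declare a block good when it contains an oriented $v$-crossing using at most $\epsilon N$ closed edges, bring this probability arbitrarily close to $1$ via a large-deviation estimate, and compare with supercritical oriented Bernoulli percolation on the renormalized lattice via Theorem~\ref{thm:perco}. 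The serious difficulty is the finite-energy rewiring that turns near-open crossings into honest open ones: in the unoriented isotropic case the classical Kesten--Cox--Durrett characterization ``$\mu_\nu$ is a norm iff $\nu(\{0\})<p_c(\Zd)$'' exploits symmetry and duality unavailable here, and---as the Section~1 example shows that $\mathrm{BG}(p)$ may be a proper cone---the rewiring must respect the (possibly narrow) cone of directions compatible with both $u$ and $v$. Making this surgery quantitative appears to require a genuinely new input, which is why the equivalence is stated only as a conjecture.
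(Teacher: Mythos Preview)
The statement is a \emph{conjecture}: the paper does not prove it, and explicitly says so (``For the moment, we only manage to prove the following result'', namely the two one-sided inclusions $\mathrm{int}(\mathrm{Bar}(A_p))\subset\mathrm{BG}(p)$ and $\bigcup_{q>p}\mathrm{BG}(q)\subset\mathrm{Bar}(A_p)$). Your write-up is therefore not a proof but a heuristic discussion, and you yourself close by conceding that the argument is incomplete. That is an honest assessment of the reverse implication; however, you should not present the forward implication as essentially settled either.

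In your forward direction $\theta_u(p)>0\Rightarrow b_p(u)=0$ the crucial ingredient is the linear-growth bound $\|y_n\|\le Cn$ for some $y_n\in C_+(0)$ with $\langle u,y_n\rangle\ge n$. You describe this as ``an oriented analog of Durrett's shape theorem'' obtainable by ``a Kingman / restart argument'', but no such statement is available in the paper's framework, and it is not a routine consequence of Theorem~\ref{thm:perco}: the sharpness theorem controls the \emph{extent} of the cluster in direction $u$, not the minimal graph-length needed to reach a given level set. Without that bound the sequence $y_n/n$ need not be precompact and your compactness extraction collapses. This is exactly why the paper, in Corollary~\ref{coco1}, takes a different route---working with the point-to-hyperplane times of Theorem~\ref{shapedir2}---and obtains only the weaker inclusion $\mathrm{int}(\mathrm{Bar}(A_p))\subset\mathrm{BG}(p)$: that theorem is proved only for $u\notin\mathrm{fr}(\mathrm{Bar}(A_p))$, precisely because the boundary case would require the kind of control you are assuming.

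For the reverse implication your diagnosis is accurate: upgrading ``$\mu_p(v)=0$'' (sublinearly many closed edges on paths to $Nv$) into ``there exists a genuinely open infinite path in direction $u$'' is the heart of the problem, and the Grimmett--Marstrand-type surgery you sketch has no known implementation in this anisotropic oriented setting. The paper sidesteps this by proving instead, via Theorem~\ref{exposoupc} and Corollary~\ref{hb}, that $p<p_c(u)\Rightarrow b_p(u)>0$, which yields only $\bigcup_{q>p}\mathrm{BG}(q)\subset\mathrm{Bar}(A_p)$ rather than $\mathrm{BG}(p)\subset\mathrm{Bar}(A_p)$. In short, both directions of the conjecture remain open, and your proposal does not close either gap.
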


For the moment, we only manage to prove the following result: 

\begin{theo} For every $p \in [0,1]$, 
$$\mathrm{int}(\mathrm{Bar}(A_{p}))\subset \mathrm{BG}(p) \qquad \text{and} \qquad  \cup_{q>p} \mathrm{int}(\mathrm{BG}(q)) \subset \mathrm{Bar}(A_p).$$
\end{theo}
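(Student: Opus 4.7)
The plan is to treat the two inclusions by quite different routes: the first is essentially a convex-analysis + shape-theorem argument, the second a contrapositive combining Jensen's inequality with the sharp phase transition theorem.

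\emph{First inclusion $\mathrm{int}(\mathrm{Bar}(A_{p}))\subset \mathrm{BG}(p)$.} The key convex-analytic fact I would invoke is that the support function $h_{A_p}(u') = \sup_{x \in A_p} \langle u', x \rangle$ is continuous on the interior of its effective domain $\mathrm{Bar}(A_p)$. Hence for $u \in \mathrm{int}(\mathrm{Bar}(A_p))$ there exist $\epsilon, M > 0$ with $h_{A_p}(u + \epsilon w) \leq M$ for every unit vector $w$. Exploiting that $x/\mu_p(x) \in A_p$ when $\mu_p(x) > 0$ (and $\lambda x \in A_p$ for all $\lambda > 0$ when $\mu_p(x) = 0$) and then specialising to $w = x/\|x\|$ produces the uniform bound
\[
\langle u, x\rangle + \epsilon \|x\| \leq M \mu_p(x), \qquad x \in \R^d.
\]
Now the asymptotic shape theorem applied to $\nu_p$-FPP (whose integrability hypothesis is trivial since $\nu_p$ is bounded) tells us that any $y \in C_+(0)$ has $t_p(0,y) = 0$, hence $\mu_p(y) = o(\|y\|)$ almost surely as $\|y\| \to \infty$. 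Combining then forces $\langle u, y \rangle < 0$ whenever $\|y\|$ is large enough, so $D_u(0) < \infty$ almost surely, i.e.\ $u \in \mathrm{BG}(p)$.

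\emph{Second inclusion $\bigcup_{q>p}\mathrm{BG}(q)\subset \mathrm{Bar}(A_p)$.} I would argue the contrapositive: assume $u \notin \mathrm{Bar}(A_p)$. The polarity $\mathrm{Bar}(A_p) = (0^+(A_p))^\circ$ yields some $v \in \R^d$ with $\mu_p(v) = 0$ and $\langle u, v \rangle > 0$. Using the Lipschitz continuity of $\mu_p$ (the paper's own bound), rational approximation of $v$ followed by clearing denominators furnishes, for any prescribed $\eta > 0$, an integer vector $w \in \Z^d$ with $\langle u, w\rangle > 0$ and $\mu_p(w) \leq \eta \langle u, w \rangle$. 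Now fix $q > p$ and couple $\P_p \prec \P_q$ monotonically via a common family of uniform variables, so that each $p$-closed edge is independently reopened at level $q$ with probability $r = (q-p)/(1-p) \in (0,1)$. The event that every $p$-closed edge on an optimal $p$-path from $0$ to $Nw$ is reopened forces $Nw \in C^q_+(0)$, so conditioning on the $p$-configuration and applying Jensen's inequality to the convex map $x \mapsto r^x$ gives
\[
\P_q\bigl(Nw \in C^q_+(0)\bigr) \;\geq\; \mathbb{E}_p\bigl[r^{t_p(0,Nw)}\bigr] \;\geq\; r^{\mathbb{E}_p t_p(0,Nw)}.
\]
Kingman's subadditive ergodic theorem provides $\mathbb{E}_p t_p(0,Nw)/N \to \mu_p(w)$, so setting $M = N\langle u, w\rangle$,
\[
\P_q\bigl(D_u(0) \geq M\bigr) \;\geq\; \exp\Bigl( -\log(1/r)\,\bigl(\mu_p(w)/\langle u, w\rangle + o(1)\bigr)\,M\Bigr).
\]
Choosing $\eta$ smaller than any hypothetical exponential decay rate $c$ prevents $\P_q(D_u(0)\geq M)\leq e^{-cM}$, so by Theorem~\ref{thm:perco} necessarily $q \geq p_c(u)$. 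Since this holds for every $q > p$, one obtains $p_c(u) \leq p$, hence $\theta_u(q) > 0$ for every $q > p$, that is, $u \notin \bigcup_{q>p} \mathrm{BG}(q)$.

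\emph{Main obstacle.} The one genuinely delicate step is the integer approximation of $v$: the recession cone $\{\mu_p = 0\}$ need not contain any rational direction, so one cannot hope to take $\mu_p(w) = 0$; however the argument only needs the ratio $\mu_p(w)/\langle u,w\rangle$ to be arbitrarily small, which the Lipschitz continuity of $\mu_p$ delivers. Everything else is a direct combination of Jensen's inequality, Kingman's theorem, and the sharp-transition Theorem~\ref{thm:perco}.
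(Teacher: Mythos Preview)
Your argument is correct, but the route differs from the paper's, especially for the second inclusion.

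\textbf{First inclusion.} The paper proves this as Corollary~\ref{coco1} by appealing to Theorem~\ref{shapedir2} (almost-sure convergence of the point-to-hyperplane passage times $t(0,H_n(u))/n$ to $b_p(u)$ for $u\notin\mathrm{fr}(\mathrm{Bar}(A_p))$): if $u\notin\mathrm{BG}(p)$ then $t(0,H_n(u))=0$ along a subsequence, forcing $b_p(u)=0$. Your approach bypasses Theorem~\ref{shapedir2} entirely, extracting instead the uniform inequality $\langle u,x\rangle+\epsilon\|x\|\le M\mu_p(x)$ directly from boundedness of the support function near $u$, and then plugging in the shape theorem. Both arguments rest on the shape theorem; yours is a touch more self-contained.

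\textbf{Second inclusion.} Here the two proofs diverge substantially. The paper (Corollary~\ref{untheoqdmm} via Corollary~\ref{hb}) reduces to showing $p<p_c(u)\Rightarrow b_p(u)>0$, which it obtains from Theorem~\ref{exposoupc}: an exponential upper bound on $\P_p(t(0,H_n(u))\le cn)$ proved by combining the set $S$ with $\phi_p(S)<1$ from the sharp-transition machinery with Alexander's extension of the van den Berg--Kesten inequality. Your coupling-plus-Jensen argument yields the same conclusion ($p<p_c(u)\Rightarrow b_p(u)>0$) but avoids the BK-type inequality altogether: you only use the \emph{output} of Theorem~\ref{thm:perco}(1) (exponential decay of $\P_q(D_u(0)\ge M)$ when $q<p_c(u)$) and contradict it with the elementary lower bound $\P_q(Nw\in C_+^q(0))\ge r^{\E_p t_p(0,Nw)}$. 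This is a genuinely more elementary path. What the paper's route buys in exchange is the quantitative Theorem~\ref{exposoupc}, which is of independent interest.

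\textbf{One cosmetic point.} The step ``$\mathrm{Bar}(A_p)=(0^+(A_p))^\circ$ yields some $v$ with $\mu_p(v)=0$ and $\langle u,v\rangle>0$'' tacitly assumes the barrier cone is closed (bipolarity gives only $(0^+(A_p))^\circ=\overline{\mathrm{Bar}(A_p)}$). You do not actually need this: $u\notin\mathrm{Bar}(A_p)$ means $b_p(u)=0$ by definition, so for every $\eta>0$ there is $x\in\R^d$ with $\langle u,x\rangle=1$ and $\mu_p(x)\le\eta/2$, and your Lipschitz approximation then produces the integer $w$ directly. The rest of the argument is unaffected.
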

This result will be a direct consequence of corollaries \ref{coco1} and \ref{untheoqdmm}.

\medskip

As in the classical setting, we can describe the asymptotic
behavior of the point-to-hyperplane passage times with $\mu_p$. For $u \in\Rd\backslash\{0\}$ and $n \ge 0$, set
\begin{align*}
  H_n(u)&=\{x\in\Rd:\;\langle x, u\rangle\ge n\} \quad \text{and} \quad t(0,H_n(u))=\inf_{x \in H_n(u)} t(0,x). 
\end{align*}

%
%

\begin{theo}
\label{shapedir2}
For each $u\in\Rd$ which is not at the boundary of $A_{p}$, we have the almost sure convergence: 
$$ \lim_{n \to +\infty}  \frac{t(0,H_n(u))}{n}=  b_p(u) .$$
\end{theo}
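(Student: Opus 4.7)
The plan is to prove matching upper and lower bounds for $t(0, H_n(u))/n$ using the asymptotic shape theorem~\eqref{PROP:FA}, treating separately the two components of $\Rd\setminus\mathrm{fr}(\mathrm{Bar}(A_{p}))$.

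\textbf{Upper bound (both regions).} For any $x \in \Qd$ with $\langle u, x\rangle > 0$, pick an integer $k \ge 1$ such that $kx \in \Zd$. By \eqref{DEF:mu}, $t(0, nkx)/(nk)\to \mu_p(x)$ almost surely. Since $nkx \in H_m(u)$ for $m=\lfloor nk\langle u,x\rangle\rfloor$ and $m\mapsto H_m(u)$ is nonincreasing, this controls the intermediate values of $m$ and yields
\[
\limsup_{m\to\infty}\frac{t(0,H_m(u))}{m}\le\frac{\mu_p(x)}{\langle u,x\rangle}.
\]
Taking the infimum over $x\in\Qd$ and using the Lipschitz continuity of $\mu_p$ (noted just before \eqref{PROP:FA}) to approximate every $x\in\Rd$ with $\langle u,x\rangle=1$ by rationals gives $\limsup_n t(0,H_n(u))/n\le b_p(u)$ in all cases.

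\textbf{Case $u\in\mathrm{ext}(\mathrm{Bar}(A_{p}))$.} Here $b_p(u)=0$ by the very definition of $\mathrm{Bar}(A_{p})$, and since $t\ge 0$ the upper bound already gives $t(0,H_n(u))/n\to 0=b_p(u)$.

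\textbf{Case $u\in\mathrm{int}(\mathrm{Bar}(A_{p}))$.} This is where the work lies: one must show the matching lower bound. For each $n$ pick a (random) near-minimizer $y_n\in H_n(u)\cap\Zd$ with $t(0,y_n)\le t(0,H_n(u))+1$. Since $\|y_n\|\ge n/\|u\|\to\infty$, the shape theorem~\eqref{PROP:FA} gives $t(0,y_n)=\mu_p(y_n)+o(\|y_n\|)$ almost surely. Positive homogeneity and the definition of $b_p$ imply $\mu_p(y_n)\ge b_p(u)\langle u,y_n\rangle\ge b_p(u)\,n$. I then split along the (random) sequence $\|y_n\|/n$:
\emph{(a)} if it stays bounded by some $C<\infty$ along a subsequence, then $t(0,y_n)/n\ge b_p(u)-o(\|y_n\|)/n\ge b_p(u)-o(1)$, producing the desired lower bound along that subsequence;
\emph{(b)} otherwise $\|y_n\|/n\to\infty$ along a subsequence and, passing to a further subsequence, $y_n/\|y_n\|\to\hat y$ for a unit vector $\hat y$. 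The shape theorem gives $t(0,y_n)/\|y_n\|\to\mu_p(\hat y)$, and since $t(0,y_n)/n=(t(0,y_n)/\|y_n\|)(\|y_n\|/n)$ is bounded (by the upper bound already proved), one must have $\mu_p(\hat y)=0$, i.e.\ $\hat y\in 0^+(A_{p})\setminus\{0\}$. Using that $0^+(A_{p})=\mathrm{Bar}(A_{p})^\circ$ (so that $\overline{\mathrm{Bar}(A_{p})}=0^+(A_{p})^\circ$), the general fact that for a closed convex cone $K\ne\{0\}$ one has $\mathrm{int}(K^\circ)=\{v:\langle v,y\rangle<0\text{ for all }y\in K\setminus\{0\}\}$ forces $\langle u,\hat y\rangle<0$; but $\langle u,y_n\rangle\ge n>0$ implies $\langle u,\hat y\rangle\ge 0$, a contradiction. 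Thus subcase (b) cannot occur and (a) applies to every subsequence, giving $\liminf_n t(0,H_n(u))/n\ge b_p(u)$.

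The main obstacle is subcase (b): the $o(\|y_n\|)$ error in the shape theorem is useless when $\|y_n\|\gg n$, so the proof really needs the geometric input from $u\in\mathrm{int}(\mathrm{Bar}(A_{p}))$ to rule out a zero-$\mu_p$ direction tilted into the half-space $\{\langle u,\cdot\rangle\ge 0\}$; this is exactly where the polar-cone identity $0^+(A_{p})=\mathrm{Bar}(A_{p})^\circ$ from Rockafellar (already invoked in the paper) plays its role.
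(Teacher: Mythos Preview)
Your proof is correct and follows essentially the same approach as the paper's: an upper bound via a fixed direction $x$ with $\langle u,x\rangle=1$ and the shape theorem, and a lower bound (in the case $u\in\mathrm{int}(\mathrm{Bar}(A_p))$) obtained by normalizing a minimizing sequence and using the polar-cone identity $0^+(A_p)=\mathrm{Bar}(A_p)^\circ$ to rule out a limit direction with $\mu_p=0$. The only cosmetic difference is organizational: the paper runs a single contradiction argument splitting on whether the limit direction $x$ satisfies $\langle u,x\rangle=0$ or $\langle u,x\rangle>0$, whereas you split upfront on whether $\|y_n\|/n$ is bounded, which amounts to the same dichotomy.
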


\begin{proof}
As in the unoriented case, it will follow from the analytic form of the shape theorem. However, the existence of directions for which $\mu_p$  vanishes requires some attention. 

$\bullet$ Let $L>b_p(u)$. There exists $x \in \Rd$ with $\langle u,x\rangle=1$ and $\mu_{p}(x)\le L$. \\ 
For $n \ge 1$, denote by $x_n$ one vertex in $H_n(u)$ which is  the closest to $nx$. \\
Then $\mu_p(x_n) \le n\mu_p(x) +O(1)$.\\
Since $t(0,H_n(u))\le t(0,x_n)$, we have $\displaystyle \limsup \frac{t(0,H_n(u))}{n}\le\limsup   \frac{ \mu_{p}(x_n)}n \le \mu_p(x)\le L$. \\
Letting $L$ go to $b_p(u)$, we obtain  that $\displaystyle \limsup \frac{t(0,H_n(u))}{n}\le b_p(u)$.

$\bullet$ If $u \not \in \mathrm{Bar}(A_{p})$, then $b_p(u)=0$ and the desired convergence is clear.

$\bullet$ If $u \in \mathrm{int}(\mathrm{Bar}(A_{p}))$, there exists $\epsilon>0$ such that the open ball centered in $u$ with radius $\epsilon$ is included in $\mathrm{Bar}(A_{p})$; moreover, $b_p(u)>0$. By contradiction, assume that there exists $\ell \in(0,b_p(u))$ such that 
$$\miniop{}{\liminf}{n\to +\infty} \frac{t(0,H_n(u))}{n}\le\ell< b_p(u).$$
Then, one can build an infinite increasing sequence integers $(n_k)$ and sites  $(x_k)$ such that $t(0,x_k)\le\ell n_k$ and $\langle u,x_k\rangle = n_k+O(1)$.
By a compactness argument, we can assume that $\frac{x_k}{\|x_k\|}\to x$.
Then, $\frac{n_k}{\|x_k\|}=\langle \frac{x_k}{\|x_k\|},u\rangle +O(1/ \|x_k\|)\to \langle x,u\rangle$.
By the asymptotic shape theorem,  $\frac{t(0,x_k)}{\|x_k\|}$ tends to $\mu_{p}(x)$, and we get the inequality
$$\mu_{p}(x)\le\ell {\langle u,x\rangle}.$$
Assume that ${\langle u,x\rangle}=0$, then $\mu_{p}(x)=0$, so $x\in 0^+(A_{p})$. But $\mathrm{Bar}(A_{p})$ is the polar cone of $ 0^+(A_{p})$: by definition of $\epsilon$, $u+\epsilon x/2 \in \mathrm{Bar}(A_{p})$, so $0 \ge \langle u+\epsilon x/2,x\rangle = \epsilon/2$, which is a contradiction. \\
So assume that ${\langle u,x\rangle}\ne 0$:  we can define $\tilde x=\frac{x}{\langle u,x\rangle}$ and then $\langle u,\tilde{x}\rangle=1$ and $\mu_{p}(\tilde x)\le\ell$, which contradicts the definition of $b_p(u)$.
\end{proof}

\begin{coro} \label{coco1}
$\mathrm{int}(\mathrm{Bar}(A_{p}))\subset \mathrm{BG}(p).$
\end{coro}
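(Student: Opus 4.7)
The plan is to use Theorem \ref{shapedir2} combined with the natural coupling between Bernoulli percolation with parameter $p$ and first-passage percolation with law $\nu_p=p\delta_0+(1-p)\delta_1$: an edge $e$ is open in the percolation configuration if and only if $t_e=0$. Under this coupling, the set $\{y\in\Zd:t(0,y)=0\}$ is exactly $C_+(0)$, and consequently
$$\{D_u(0)=+\infty\}\subset\bigcap_{n\ge 1}\{t(0,H_n(u))=0\},$$
because if $y\in C_+(0)$ satisfies $\langle y,u\rangle\ge n$, then $y\in H_n(u)$ and the open oriented path from $0$ to $y$ has zero passage time.

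Now suppose $u\in\mathrm{int}(\mathrm{Bar}(A_p))$. Then $u$ lies in the interior of the barrier cone, so in particular $u\notin \mathrm{fr}(\mathrm{Bar}(A_p))$ and $b_p(u)>0$ (since membership in $\mathrm{Bar}(A_p)$ is precisely $b_p(u)>0$ by definition). Theorem \ref{shapedir2} then applies and yields
$$\lim_{n\to+\infty}\frac{t(0,H_n(u))}{n}=b_p(u)>0\qquad\P_p\text{-almost surely}.$$
In particular, almost surely $t(0,H_n(u))>0$ for all large enough $n$, which forces the event $\bigcap_{n\ge 1}\{t(0,H_n(u))=0\}$ to be $\P_p$-negligible. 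By the inclusion above, $\P_p(D_u(0)=+\infty)=0$, i.e. $u\in\mathrm{BG}(p)$.

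There is no real obstacle here: Theorem \ref{shapedir2} does all of the analytic work, and the argument reduces to unpacking the $\{0,1\}$-valued coupling. The only small point to check is the implication ``$u\in\mathrm{int}(\mathrm{Bar}(A_p))\Rightarrow b_p(u)>0$ and $u\notin\mathrm{fr}(\mathrm{Bar}(A_p))$'', both of which are immediate from the definition of $\mathrm{Bar}(A_p)=\{v:b_p(v)>0\}$ and the fact that an interior point of a set is disjoint from its topological boundary. The harder direction of the (conjectured) equality $\mathrm{Bar}(A_p)=\mathrm{BG}(p)$—namely handling the boundary $\mathrm{fr}(\mathrm{Bar}(A_p))$—is precisely what the corollary leaves open.
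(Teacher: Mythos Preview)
Your proof is correct and follows essentially the same approach as the paper: both use the coupling between Bernoulli($p$) percolation and first-passage percolation with law $\nu_p$ (so that $C_+(0)=\{y:t(0,y)=0\}$) together with Theorem~\ref{shapedir2}. The only cosmetic difference is that the paper argues by contrapositive (if $u\notin\mathrm{BG}(p)$ then $t(0,H_n(u))=0$ for all $n$ with positive probability, forcing $b_p(u)=0$), whereas you argue directly.
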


\begin{proof}
Assume that $u\not\in\mathrm{BG}(p)$. Then, $\theta_u(p)>0$.
On the event $$\displaystyle  \sup_{x\in C_+(0)} \langle x,u\rangle=+\infty,$$for each $n\ge 1$, one can find $x_n\in C_+(0)$,  with $\langle x_n,u\rangle\ge n$. Then,  $x_n\in H_n(u)$ and $t(0,H_n(u))=0$. We then apply Theorem~\ref{shapedir2}.
\end{proof}

\begin{theo}
\label{exposoupc} 
Fix $u \in \Rd \backslash \{0\}$ such that $\miniop{}{\liminf}{x\to u} p_c(x)>0$
and fix $p$ such that $0<p<\miniop{}{\liminf}{x\to u} p_c(x)$.
There exist constants $A,B,\kappa>0$ such that
$$\forall n \ge 0 \quad \P(t(0,H_n(u))\le \kappa n)\le Ae^{-B n}.$$
\end{theo}

\begin{proof}
The idea is close to the one used by  Grimmett and Kesten~\cite{grimmett-kesten} to obtain large deviations inequalities for first-passage percolation: along an optimal path from $0$ to $H_n(u)$, we expect to find a number proportional to $n/N$ of disjoint streches whose increase in the $u$-direction is at least $N$. However, as $p<p_c(u)$, the first point of Theorem \ref{thm:perco} ensures that 
$$\P_p \left(\sup_{y\in C_+(0)}\langle y,u\rangle \ge N \right)$$
decreases exponentially fast with $N$, so with high probability, streches whose increase in the $u$-direction is at least $N$ have to use edges with passage time $1$, and should globally contribute to an amount of time $\kappa n$ for some small $\kappa>0$. A renormalisation argument allows to make all this accurate.

However, we did not manage to implement the renormalisation argument under the assumption $p<p_c(u)$, and we rather work under the stronger assumption
$$p<\miniop{}{\liminf}{x\to u} p_c(x).$$

  1. We can assume without loss that $\|u\|_1<1$.  
Then, we can find $\delta\in\R^2 \backslash \{0\}$ with $\langle u,\delta\rangle=0$ such that $v=u+\delta$ and $w=u-\delta$ satisfy $p<\min(p_c(v),p_c(w))$, $\|v\|_{1}<1$ and $\|w\|_{1}<1$.  By construction, $u=\frac{v+w}2$. 
We define the following set
$$T=\left\{x\in\R^2: \;\langle x,v\rangle \le 1, \; \langle x,w\rangle \le 1, \; \langle x,u\rangle \ge -10\right\}.$$
We can easily check that $T$ is bounded and thus is a triangle: for any $x \in T$,
\begin{equation}
\label{EQ-TB}
 -10\le \langle x,u\rangle\le 1,\quad -11\le \langle x,\delta\rangle\le 11, \quad \text{ and } \quad \|x\|_2\le C=\sqrt{\frac{100}{\|u\|_2^2}+\frac{121}{\|\delta\|_2^2}}.
 \end{equation}
As $\sup \{\langle x,v\rangle: \; x \in [-1,1]^2\}=\|v\|_1$, we also check that $[-1,1]^2 \subset T$, and we set
\begin{align*}
  \theta 
  &=\max(\|v\|_{1},\|w\|_{1})<1.  
\end{align*}

2. For an integer $N \ge 4$, we partition $\Z^2$ into boxes $(B_N(k))_{k \in \Z^2}= (2Nk+\{-N,\dots, N-1\}^2)_{k \in \Z^2}$. We set $B_N=B_N(0)$, and $T_N$ is the image of $T$ by the dilatation with ratio $N$. Note that $B_N$ is included inside $T_N$. We then define naturally the translated triangles  $T_N(k)=2Nk+ T_N$. 

Consider now a path $\gamma$ from $0$ to $H_n(u)$. As in Grimmett-Kesten~\cite{grimmett-kesten}, we now associate to this path a squeleton $\Gamma=(i_0, i_1, \dots,i_\ell)$ of distinct $N$-boxes and a sequence $(b_0, b_1, \dots, b_\ell)$ of sites such that, except for the last point, $b_k \in B_N(i_k) \subset  T_N(i_k)$, in the following manner. Set $i_0=0$ and $b_{0}=0$.
Suppose $i_0,\dots,i_n$, $b_0,\dots,b_n$ have been defined.

\def\fin{\ell}

\begin{itemize}
\item If the last point $\gamma_{\mathrm{last}}$ of $\gamma$ belongs to $T_N(i_n)$, then we  end the process and set $\ell=n$. 
\item Otherwise,
  let $b_{n+1}$ be the first point of the path that is outside $T_N(i_n)$ and define $i_{n+1}$  as the only index such that $b_{n+1}\in B_N(i_{n+1})$. We also a crossing type for $b_{n+1}$:
  
  \begin{itemize} \item if $\langle b_{n+1}-2Ni_n,v\rangle>N$ or $\langle b_{n+1}-2Ni_n,w\rangle>N$,  we say that the crossing type of $i_n$ is  $\text{up}$;
  \item otherwise $\langle b_{n+1}-2Ni_{n},u\rangle<-10N$ and we say that the crossing type of $i_n$ is $\text{down}$.
  \end{itemize}

 \end{itemize} 
We then remove the loops from this sequence, and we obtain, by relabeling the coordinates of the remaining $N$-boxes if necessary,  the squeleton  $\Gamma=(i_k)_{0 \le l \le \ell}$ of the path $\gamma$, see Grimmett-Kesten~\cite{grimmett-kesten} for details. We denote by $I_{\text{up}}(\gamma)$ and $I_{\text{down}}(\gamma)$  the number of crossings of the squeleton that are of the respective types $\text{up}$ and $\text{down}$. Note that $I_{\text{up}}+I_{\text{down}}(\gamma)=\ell$.
Let us now establish rough bounds for $I_{\text{up}}$ and $I_{\text{down}}$ by using the following decomposition
$$\gamma_{\textrm{last}}=(\gamma_{\textrm{last}}-b_\ell)+\sum_{k=0}^{\fin-1} (b_{k+1}-2Ni_k)+\sum_{k=0}^{\fin-1}(2Ni_k-b_k).$$
We have the following estimates:
\begin{itemize}
\item $\gamma_{\textrm{last}}$ and $b_\ell$ are both in $T_N(i_\ell)$, so with \eqref{EQ-TB}, $\langle \gamma_{\textrm{last}}-b_\ell,u \rangle \le 11N$.
\item For any $k \in \{1, \dots, \ell\}$,  let $a_k$
 be the last point of the path $\gamma$ before $b_k$ to be in $T_N(i_{k-1})$. As $a_k \in T_N(i_{k-1})$, with \eqref{EQ-TB} we have $\langle a_{k}-2Ni_{k-1},u\rangle\le N$. As $(a_k,b_k)$ is an edge, $\langle b_{k}-2Ni_{k-1},u\rangle\le N+K$, where 
 $$K=\max_{e \in\text{Dir}}\langle e,u \rangle >0.$$
 \item If $i_k$ is of type down, $\langle b_{k}-2Ni_{k-1},u\rangle\le -10N$.
 \item For any $k \in \{0, \dots, \ell-1\}$, $b_k \in B_N(i_{k})$, thus $\langle 2Ni_k-b_k, u \rangle \le N\|u\|_1\le N$.
\end{itemize}
As $\gamma$ is a path from $0$ to $H_n(u)$, this leads, for any fixed $N\ge K$, to
\begin{align*}
n & \le \langle \gamma_{\textrm{last}}, u \rangle  \\
& \le 11N + (N+K)I_{\text{up}}(\gamma)-10N I_{\text{down}}(\gamma)+N(I_{\text{up}}(\gamma)+I_{\text{down}}(\gamma)) \\
&  \le 11N+3N( I_{\text{up}}(\gamma)-3 I_{\text{down}}(\gamma)).
 \end{align*}
 From this, we first deduce that  for every $n$ large enough,
\begin{equation}
\label{EQ:upn}
\ell \ge I_{\text{up}}(\gamma) \ge \frac{n}{3N} -\frac{11}3 \ge \frac{n}{4N}.
\end{equation}
 And we also see that $3N( I_{\text{up}}(\gamma)-3 I_{\text{down}}(\gamma)) \ge n-11N \ge 0$ for every $n$ large enough, so 
 $3I_{\text{down}}(\gamma)\le I_{\text{up}}(\gamma)$ and 
 \begin{equation}
 \label{EQ:upell}
 I_{\text{up}}(\gamma)\ge\frac{3}4(I_{\text{up}}(\gamma)+I_{\text{down}}(\gamma))=\frac34 \ell.
 \end{equation}

\medskip
3. For $k\in\Z^2$, we say that the box $B_N(k)$ is good if for each $x\in B_N(k)$, 
$$\max\left( \sup_{y\in C_+(x)}\langle y,v\rangle, \sup_{y\in C_+(x)}\langle y,w\rangle \right)<N(1-\theta)$$ and that $B_N(k)$ is bad otherwise. 
As $p<\min(p_c(v),p_c(w))$, by the first point of Theorem \ref{thm:perco}, there exists $\alpha>0$ such that for every $n\ge 1$,
$$\P_p \left(\sup_{y\in C_+(0)}\langle y,v\rangle \ge n \right)\le e^{-\alpha n} \quad \text{ and } \quad \P_p \left(\sup_{y\in C_+(0)}\langle y,w\rangle \ge n \right) \le e^{-\alpha n}.$$
Thus, for every $N \ge 1$, $\P_p \left(B_N\text{ is bad}\right)\le 2(2N+1)^2 e^{-\alpha (1-\theta)N}$ and 
\begin{align*}
 \lim_{N \to +\infty} \P_p \left( \text{$B_N$ is good} \right)=1.
\end{align*}
Let us also denote by $I_{\text{up}}^G(\gamma)$  the number of boxes that belong to the squeleton, whose associated crossing is of type up, and that are also good. Assume that $i_k$ is the index of a crossing of type up, and for instance that
$\langle b_{k+1}-2Ni_k,v\rangle>N$. Then, as $b_k \in  B_N(i_k)$, with \eqref{EQ-TB}, 
\begin{align*}
\langle b_{k+1}-b_k,v  \rangle & = \langle b_{k+1}-2Ni_k,v  \rangle - \langle b_k-2Ni_k,v  \rangle 
 \ge N(1-\theta).
\end{align*}
If moreover the box $B_N(i_k)$ is good, then
$\sup_{y\in C_+(b_k)}\langle y,v\rangle < N(1-\theta)$, and this implies that the portion of the path $\gamma$ between $b_k$ and $b_{k+1}$ uses at least one edge with passage time $1$; and the same is true if the crossing is up because $\langle b_{k+1}-2Ni_k,w\rangle>N$. So
\begin{equation}
\label{EQ:tgamma}
t(\gamma) \ge I_{\text{up}}^G(\gamma).
\end{equation}
From now on, we will denote it as $I^{G}_{\text{up}}(\Gamma,\varepsilon)$ the number of good boxes associated to the couple formed by a squeleton and a sequence of up/down status for its crossings. 

\medskip
4. Remember that $C$ is defined in \eqref{EQ-TB}. We now fix the last parameters: choose $\alpha>0$ and $\rho \in (0,1)$ such that
$$\beta=8(C+1)^2(\rho e^{-\alpha}+(1-\rho))^{3/4}<1.$$
As the states of the boxes are identically distributed and only locally dependant and $\lim_{N \to +\infty} \P_p \left( \text{$B_N$ is good} \right)=1$, we can use the Liggett-Schonnmann-Stacey coupling result~\cite{LSS}: there exists $ N$ large enough such that the field of the states of the boxes $(B_N(k))_{k \in \Z^2}$ stochastically dominates a product of Bernoulli laws with parameter~$\rho$.
We then fix $\kappa>0$ small enough to have
$$e^{\alpha \kappa}\beta^{\frac1{4 N}}<1.$$
Thanks to \eqref{EQ:upn}, \eqref{EQ:upell} and \eqref{EQ:tgamma}, we have
\begin{align*}
& \P_p \left( t(0,H_n(u))\le \kappa n \right) \\
& \le  \P_p \left( \begin{array}{c}
\exists \ell \ge \frac{n}4, \;   \exists \Gamma=(\Gamma_k)_{0 \le k \le \ell} \text{ squeleton}, \; \exists (\varepsilon_k)_{0\le k<\ell}\in\{\text{up},\text{down}\}^\ell: \\
  I_{\text{up}}(\Gamma,\varepsilon)\ge \frac34 \ell, \; I_{\text{up}}^G(\Gamma,\varepsilon)  \le \kappa n \end{array} \right)
\end{align*}
Because of \eqref{EQ-TB}, for a fixed $\ell \ge n/4$, 
there are at most $(8(C+1)^2)^{\ell}$ couples $(\Gamma,\varepsilon)$ with length $\ell$.
If such a couple satisfies $I_{\text{up}}(\Gamma,\varepsilon)\ge \frac34\ell$, then
$I_{\text{up}}^G(\Gamma,\varepsilon)$ stochastically dominates a variable $S$ with binomial law with parameters $(\lceil \frac34\ell \rceil,\rho)$, so 
\begin{align*}
  \P_p(I_{\text{up}}^G(\Gamma,\varepsilon)  \le \kappa n)&\le \P(S\le \kappa n)=\P(e^{-\alpha S}\ge e^{-\alpha\kappa n})\\
  &\le e^{\alpha\kappa n}\E(e^{-\alpha S})\le e^{\alpha\kappa n}(\rho e^{-\alpha}+1-\rho)^{\frac34\ell}.
\end{align*}
So we obtain:
\begin{align*}
  \P_p \left( t(0,H_n(u))\le \kappa n \right) & \le \sum_{\ell\ge \frac{n}{4N}} (8(C+1)^2)^{\ell} e^{\alpha\kappa n}(\rho e^{-\alpha}+1-\rho)^{\frac34\ell}\\
  &\le e^{\alpha \kappa n} \sum_{\ell\ge \frac{n}{4N}} \beta^{\ell}=\frac1{1-\beta}(e^{\alpha \kappa}\beta^{\frac1{4N}})^n,
\end{align*}
which ends the proof. 
\end{proof}

\begin{coro} \label{coco2}
\label{hb}For each $u\in\Rd\backslash\{0\}$,
$p<\liminf_{x\to u}p_c(x)\Longrightarrow b_p(u)>0.$
\end{coro}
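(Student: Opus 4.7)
The plan is to derive Corollary \ref{coco2} as a short consequence of Theorem \ref{exposoupc} combined with the limsup half of the proof of Theorem \ref{shapedir2}.

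First, I apply Theorem \ref{exposoupc}: since $p<p_c(u)$, it provides constants $c,\alpha>0$ such that
\[\P_p\bigl(t(0,H_n(u))\le cn\bigr)\le e^{-\alpha n}\qquad\text{for every } n\ge 0.\]
As this bound is summable in $n$, the Borel--Cantelli lemma yields that, almost surely, $t(0,H_n(u))>cn$ for every $n$ large enough, and therefore
\[\liminf_{n\to+\infty}\frac{t(0,H_n(u))}{n}\ge c\qquad\text{almost surely.}\]

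Second, I would reread the first bullet of the proof of Theorem \ref{shapedir2} and observe that it actually establishes, for every $u\neq 0$ without any further hypothesis, the almost-sure inequality
\[\limsup_{n\to+\infty}\frac{t(0,H_n(u))}{n}\le b_p(u).\]
Indeed, the argument simply picks, for each $L>b_p(u)$, a point $x$ with $\langle u,x\rangle=1$ and $\mu_p(x)\le L$, selects lattice points $x_n\in H_n(u)$ closest to $nx$, invokes the asymptotic shape theorem for $t(0,x_n)/n$, and then lets $L\downarrow b_p(u)$. Since $\mu_p$ is a finite-valued semi-norm on $\Rd$ and $u\neq 0$, the value $b_p(u)$ is finite, so this construction goes through; the restriction $u\notin\mathrm{fr}(\mathrm{Bar}(A_p))$ stated in Theorem \ref{shapedir2} is needed only for the matching liminf lower bound.

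Combining the two bounds forces $b_p(u)\ge c>0$, which is exactly the conclusion of Corollary \ref{coco2}. There is essentially no genuine obstacle here: the only point worth checking carefully is that the limsup half of the argument behind Theorem \ref{shapedir2} is truly unconditional in $u$, after which the corollary is immediate.
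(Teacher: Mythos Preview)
Your proposal is correct and follows essentially the same route as the paper's proof: apply Theorem~\ref{exposoupc}, use Borel--Cantelli to get $\liminf_n t(0,H_n(u))/n\ge c$ almost surely, and compare with the $\limsup$ bound from Theorem~\ref{shapedir2} to conclude $b_p(u)\ge c>0$. You are in fact slightly more careful than the paper, which simply cites Theorem~\ref{shapedir2} without commenting on its hypothesis $u\notin\mathrm{fr}(\mathrm{Bar}(A_p))$; your observation that the $\limsup$ half of that proof is unconditional in $u$ is exactly what is needed to cover all $u\in\Rd\setminus\{0\}$.
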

\begin{proof}
Suppose $p<\liminf_{x\to u}p_c(x)$. By Theorem~\ref{exposoupc}, there exist $c,\alpha>0$ such that for each $n\ge 1$, $\P_p(t(0,H_n(u))\le cn)\le e^{-\alpha n}$.
Then, with the Borel--Cantelli lemma and Theorem~\ref{shapedir2}, we get $b_p(u)\ge c$.
\end{proof}


\begin{coro}\label{untheoqdmm}
$\displaystyle \cup_{q>p} \mathrm{int}(\mathrm{BG}(q)) \subset \mathrm{Bar}(A_p)$.
\end{coro}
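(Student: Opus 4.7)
The plan is to derive the inclusion directly from Corollary~\ref{hb}, together with the elementary monotonicity of $\theta_u$ in the parameter~$p$. The intuition is that $\mathrm{BG}(q)$ collects exactly those directions $u$ with $p_c(u)\ge q$, so membership in $\bigcup_{q>p}\mathrm{BG}(q)$ forces $p<p_c(u)$, and Corollary~\ref{hb} then delivers $b_p(u)>0$, i.e.\ $u\in\mathrm{Bar}(A_p)$.

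Concretely, I would first record the preliminary inclusion $\mathrm{BG}(q)\subset\{u\in\Rd\setminus\{0\}:p_c(u)\ge q\}$. The event $\{D_u(0)=+\infty\}$ is increasing in the edge configuration, so under the standard monotone coupling of the Bernoulli products $\P_{p_1}$ and $\P_{p_2}$ with $p_1\le p_2$ one gets $\theta_u(p_1)\le\theta_u(p_2)$. If $u\in\mathrm{BG}(q)$ then $\theta_u(q)=0$; assuming by contradiction that $p_c(u)<q$, the definition of $p_c(u)$ yields some $p'\in(p_c(u),q)$ with $\theta_u(p')>0$, and monotonicity gives $\theta_u(q)\ge\theta_u(p')>0$, a contradiction. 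Hence $q\le p_c(u)$.

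Given an arbitrary $u\in\bigcup_{q>p}\mathrm{BG}(q)$, pick $q>p$ with $u\in\mathrm{BG}(q)$. The preliminary step gives $p<q\le p_c(u)$, and in particular $p<p_c(u)$. Corollary~\ref{hb} (coco2) then yields $b_p(u)>0$, which by the very definition of $\mathrm{Bar}(A_p)$ says exactly $u\in\mathrm{Bar}(A_p)$, as required.

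There is no substantive obstacle in this corollary: all the analytic content has already been done in Theorem~\ref{exposoupc} (exponential decay of the point-to-hyperplane passage time in the subcritical-direction regime) and in Corollary~\ref{hb} (which converts that decay into a lower bound on $b_p$ via Borel--Cantelli and Theorem~\ref{shapedir2}). The only ingredient added here is the trivial monotonicity of $\theta_u$ in $p$, used to interpret $\mathrm{BG}(q)$ in terms of $p_c(u)$.
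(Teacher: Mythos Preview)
Your proof is correct and follows essentially the same route as the paper: from $u\in\mathrm{BG}(q)$ for some $q>p$ you deduce $\theta_u(q)=0$, hence $p<q\le p_c(u)$, and then invoke Corollary~\ref{hb} to conclude $b_p(u)>0$, i.e.\ $u\in\mathrm{Bar}(A_p)$. You have simply made explicit the monotonicity of $\theta_u$ in $p$ that the paper leaves implicit in the step ``$\theta_u(q)=0$, so $p<p_c(u)$''.
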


\begin{proof}
Consider $u\in \displaystyle \cup_{q>p} \text{int}(\mathrm{BG}(q))$:
there exists $q>p$ such that $u\in \text{int}(\mathrm{BG}(q))$, which means
that there exists $\delta>0$, with $B(u,\delta)\subset \mathrm{BG}(q)$.
For each $x\in B(u,\delta)$, we have $\theta_x(q)=0$ and $p_c(x)\ge q$.
This implies that $\liminf_{x\to u}p_c(x)\ge q>p$.
We conclude with Corollary~\ref{hb}.
\end{proof}


\def\refname{References}
\bibliographystyle{plain}
\bibliography{percoo-fpp}


\end{document}